\def\ov#1{{\overline{#1}}}
\def\wt#1{{\widetilde{#1}}}
\newcommand{\rank}{{\operatorname{rank}}}
\newcommand{\h}{{\operatorname{h}}}
\newcommand{\supp}{{\operatorname{supp}}}
\newcommand{\Xo}{{\X^{\rm o}}}
\newcommand{\Wo}{{\W^{\rm o}}}
\newcommand{\Qb}{\overline{\mathbb Q}}
\newcommand{\Gm}{{\mathbb G}_{\rm m}}
\newcommand{\e}{{\varepsilon}}
\newcommand{\size}{{\operatorname{size}}}
\renewcommand{\Im}{{\operatorname{im}}}
\newcommand{\hooklongrightarrow}{\lhook\joinrel\longrightarrow}
\newcommand{\SL}{{\operatorname{SL}}}
 \newcommand{\C}{{\mathbb{C}}}
\newcommand{\D}{{\mathcal{D}}} \newcommand{\E}{{\mathbb{E}}} 
\newcommand{\K}{{\mathbb{K}}} 
 \renewcommand{\P}{{\mathbb{P}}}
\newcommand{\Q}{{\mathbb{Q}}} 
\newcommand{\T}{{T}}
\newcommand{\Tp}{{T'}}
\newcommand{\Z}{{\mathbb{Z}}}
\newcommand{\Cu}{{\mathcal{U}}} 
\newcommand{\W}{{\mathcal{W}}} \newcommand{\X}{{\mathcal{X}}}
\newcommand{\Y}{{\mathcal{Y}}} 
\newcommand{\ag}{{\boldsymbol{a}}}
\newcommand{\bg}{{\boldsymbol{b}}}
\newcommand{\bfg}{{\boldsymbol{g}}}
\newcommand{\xg}{{\boldsymbol{x}}}
\newcommand{\yg}{{\boldsymbol{y}}}
\newcommand{\Bg}{{\boldsymbol{B}}}
\newcommand{\gammag}{{\boldsymbol{\gamma}}}
\renewcommand{\thetag}{{\boldsymbol{\theta}}}
\newcounter{thm}
\numberwithin{equation}{section}
\numberwithin{thm}{section}
\theoremstyle{definition}
\newtheorem{remark}[thm]{Remark}
\newtheorem{example}[thm]{Example}
\theoremstyle{plain}
\newtheorem{lemma}[thm]{Lemma}
\newtheorem{theorem}[thm]{Theorem}
\newtheorem{corollary}[thm]{Corollary}
\newtheorem{prop-def}[thm]{Proposition-Definition}
\begin{document}

\selectlanguage{english}

\title[Unlikely intersections and multiple roots]{Unlikely
  intersections and multiple roots of sparse polynomials}

\author[Amoroso]{Francesco Amoroso}
\address{Laboratoire de math\'ematiques Nicolas Oresme, CNRS 
UMR 6139, Universit\'e de Caen. BP 5186, 14032 Caen Cedex, France}
\email{francesco.amoroso@unicaen.fr}
\urladdr{\url{http://www.math.unicaen.fr/~amoroso/}}

\author[Sombra]{Mart{\'\i}n~Sombra}
\address{ICREA \&
Departament d'{\`A}lgebra i Geometria, Universitat de Barcelona.
Gran Via~585, 08007 Barcelona, Spain}
\email{sombra@ub.edu}
\urladdr{\url{http://atlas.mat.ub.es/personals/sombra/}}

\author[Zannier]{Umberto Zannier}
\address{Scuola Normale Superiore, Classe di Scienze. Piazza dei
  Cavalieri 7, 56126 Pisa, Italy}
\email{u.zannier@sns.it}
\urladdr{\url{http://www.sns.it/didattica/scienze/menunews/personale/docenti/zannierumberto/}}

\date{\today} \subjclass[2010]{Primary 11C08; Secondary 11G50.}
\keywords{Sparse polynomial, multiple roots, unlikely
  intersections.}

\thanks{This research was partially financed by the European project
  ERC Advanced Grant \og Diophantine problems\fg{} (grant agreement
  n$^{\circ}$ 267273), the CNRS project PICS 6381 \og G\'eom\'etrie
  diophantienne et calcul formel\fg{}, and the Spanish project MINECO
  MTM2012-38122-C03-02.}

\begin{abstract}
  We present a structure theorem for the multiple non-cyclotomic
  irreducible factors appearing in the family of all univariate
  polynomials with a given set of coefficients and varying exponents.
  Roughly speaking, this result shows that the multiple non-cyclotomic
  irreducible factors of a sparse polynomial, are also
  sparse.

  To prove this, we give a variant of a theorem of Bombieri and
  Zannier on the intersection of a fixed subvariety of codimension 2
  of the multiplicative group with all the torsion curves, with bounds
  having an explicit dependence on the height of the subvariety. We
  also use this latter result to give some evidence on a conjecture of
  Bolognesi and Pirola.
\end{abstract}

\maketitle

\section{Introduction}

This text is motivated by the following question: let $f\in
\Qb[t^{\pm 1}]$ be a sparse Laurent polynomial, that is, a polynomial
of high degree but relatively few nonzero terms. When does $f$ have a
multiple root in $\Qb^{\times}$?

In more precise terms, we consider sparse Laurent polynomials
given by the restriction of a \emph{fixed} regular function on
$\Gm^N$, namely 
a multivariate Laurent polynomial, to a \emph{varying} 1-parameter
subgroup.  Let $N\ge 1$ and $\gammag=(\gamma_0,\gamma_1,\dots,\gamma_{N})\in
\Qb^{N+1}$. For $\ag=(a_{1},\dots, a_{N})\in \Z^{N}$ set
$$
f_{\ag}=\gamma_0+\gamma_1 t^{a_1}+\cdots+\gamma_N t^{a_N} \in
\Qb[t^{\pm 1}]. 
$$
This  Laurent polynomial  the restriction of the affine multivariate polynomial
\begin{displaymath}
  L=\gamma_0+\gamma_1 x_{1}+\cdots+\gamma_N x_{N}\in \Qb[x_{1},\dots, x_{N}]
\end{displaymath}
to the subgroup of the multiplicative group $\Gm^{N}=(\Qb^{\times})^{N}$ parameterized by the
monomial map $t\mapsto (t^{a_{1}},\dots, t^{a_{N}})$.

The occurrence of many Laurent polynomials of the form $f_{\ag}$ with
a multiple root certainly happens in the following situation. Let $1 \le k\le N-1$ be an integer, 
$\bg_1,\ldots,\bg_N\in\Z^{N-k}$ and $\yg=(y_{1},\dots, y_{N-k})$ be a group of $N-k$ variables. Consider the Laurent polynomial
\begin{equation}\label{eq:8}
  {F}=\gamma_0+\gamma_1\yg^{\bg_1}+\cdots+\gamma_N\yg^{\bg_N} \in
\Qb[y_{1}^{\pm 1},\dots, y_{N-k}^{\pm 1}] 
\end{equation}
with  $\yg^{\bg_{i}}=y_1^{b_{i,1}}\cdots
y_{N-k}^{b_{i,N-k}}$. Suppose that $F$ has a multiple nontrivial factor $P$. 
Let  $\thetag\in \Z^{N-k} $ such that $P(t^{\theta_{1}}, \dots, t^{\theta_{N}})$ is not a monomial. Then,
for $a_i=\langle\bg_{i},\thetag\rangle$, we have 
\begin{displaymath}
 f_{\ag}=F(t^{\theta_{1}}, \dots, t^{\theta_{N}}) 
\end{displaymath}
and every root of $P(t^{\theta_{1}}, \dots, t^{\theta_{N}})$ is a
multiple root of $f_{\ag}$.

Indeed, our main result (Theorem \ref{multiple}) shows that there is a
\emph{finite} family of multivariate Laurent polynomials as in
\eqref{eq:8} such that all multiple \emph{non-cyclotomic } roots
occurring in the family of polynomials $f_{\ag}$, $\ag\in\Z^{N}$, come
by restricting the multiple factors in this finite family to a
1-parameter subgroup, as explained above. In particular, the multiple
non-cyclotomic irreducible factors of the $f_{\ag}$'s are also sparse, in
the sense that they are the restriction of a fixed Laurent polynomial
to a varying 1-parameter subgroup of $\Gm^{N}$.

The following is a precise statement of this result. 
For $\ag\in \Z^{N}$, we denote
by $\vert \ag\vert$ the maximum of the absolute values of the coordinates of
this vector. 
We also denote by $\upmu_{\infty}$ the subgroup of $\Qb^{\times}$ of roots
of unity.

\begin{theorem}
\label{multiple}
Let $N\ge 1$ and $\gammag=(\gamma_0,\gamma_1,\ldots,\gamma_N)\in
\Qb^{N+1}$. There exists an effectively computable constant~$C$ depending only on $N$ and
$\gammag$ such that the following holds.

Let $\ag=(a_1,\ldots,a_N)\in\Z^N$ such that the Laurent polynomial 
$$
f_{\ag}=\gamma_0+\gamma_1t^{a_1}+\cdots+\gamma_Nt^{a_N}\in \Qb[t^{\pm 1}]
$$ 
is nonzero and has a multiple root $\xi \in \Qb^{\times}\setminus \upmu_{\infty}$. 
Then there exist $1 \le k\le N-1$ and
$\bg_1,\ldots,\bg_N,\thetag\in\Z^{N-k}$ such that
\begin{enumerate}
\item \label{item:1} $\vert\bg_{i}\vert\leq C$, $i=1,\dots, N$, and
  $\vert\thetag\vert\le C |\ag|$;
\item \label{item:2} the matrix $\Bg=(b_{i,j})_{i,j}\in
  \Z^{N\times(N-k)} $ is \emph{primitive}, in the sense that it can be
  completed to  a matrix in ${\SL}_N(\Z)$, and $\ag=\Bg\cdot\thetag$;
\item \label{item:3} the Laurent polynomial 
$F=\gamma_0+\gamma_1\yg^{\bg_1}+\cdots+\gamma_N\yg^{\bg_N} \in\Qb[y_{1}^{\pm1},\dots, y_{N-k}^{\pm1}]$ has a multiple factor $P$ such that $\xi$
is a root of $P(t^{\theta_{1}}, \dots, t^{\theta_{N}})$.
\end{enumerate}
\end{theorem}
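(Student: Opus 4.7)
The plan is to recast the multiple-root condition as an unlikely-intersection problem in a product of tori, to which the height-effective variant of Bombieri--Zannier announced in the abstract can be applied. Let $\xi\in\Qb^\times\setminus\upmu_\infty$ be a multiple root of $f_\ag$. After discarding indices with $a_i=0$ (absorbed into $\gamma_0$), the conditions $f_\ag(\xi)=0$ and $\xi f_\ag'(\xi)=0$ become
$$\gamma_0+\sum_{i=1}^N\gamma_i\xi^{a_i}=0,\qquad \sum_{i=1}^N a_i\gamma_i\xi^{a_i}=0.$$
Setting $x_i:=\xi^{a_i}$ and $y_i:=a_ix_i$, the pair $(x,y)$ lies on the \emph{fixed} codimension 2 subvariety
$$X:=\{(x,y)\in\Gm^{2N}:L(x)=0,\ \gamma_1y_1+\cdots+\gamma_Ny_N=0\}$$
and simultaneously on the one-dimensional coset $C_\ag:=\rho_\ag\cdot H_\ag\subset\Gm^{2N}$, where $H_\ag=\{(t^{a_1},\ldots,t^{a_N},t^{a_1},\ldots,t^{a_N}):t\in\Gm\}$ is a $1$-dimensional subtorus and $\rho_\ag:=(1,\ldots,1,a_1,\ldots,a_N)$ is a rational translate of height $O(\log|\ag|)$.

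Every Galois conjugate of $\xi$ over $\Q(\gammag)$ yields another point of $X\cap C_\ag$; since $\xi\notin\upmu_\infty$, a Dobrowolski-type lower bound on $\h(\xi)$ makes this orbit numerically substantial (this is precisely where the exclusion of cyclotomic factors enters). I would then invoke the variant of Bombieri--Zannier from earlier in the paper, which constrains the intersection of a fixed codimension 2 subvariety with $1$-dimensional torsion-type cosets via explicit dependence on the height of the coset. The resulting dichotomy is that either $|\ag|$ is bounded by an effectively computable $C=C(N,\gammag)$ (finitely many cases, handled directly), or $C_\ag$ is forced inside a proper algebraic subtorus of $\Gm^{2N}$ drawn from a fixed finite list of anomalous ones. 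Under the lattice--subtorus dictionary, the second alternative means $\ag$ lies in one of finitely many primitive sublattices $\Lambda\subset\Z^N$ of rank $N-k<N$, each admitting a basis of uniformly bounded size.

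Picking such a $\Lambda$ with primitive basis $\bg_1,\ldots,\bg_{N-k}$ produces the matrix $\Bg\in\Z^{N\times(N-k)}$ of item~(2); writing $\ag=\Bg\thetag$ with $\thetag\in\Z^{N-k}$ yields item~(1), the bound $|\bg_i|\le C$ being built into the finite list and $|\thetag|\le C|\ag|$ following from standard lattice reduction using the primitivity and bounded size of $\Bg$. For item~(3), substituting $x_i=\yg^{\bg_i}$ in $L$ produces $F=\gamma_0+\gamma_1\yg^{\bg_1}+\cdots+\gamma_N\yg^{\bg_N}$, and since $f_\ag(t)=F(t^{\theta_1},\ldots,t^{\theta_{N-k}})$, the multiple root $\xi$ of $f_\ag$ comes from a multiple factor $P$ of $F$. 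The main obstacle is the execution of the middle paragraph: the translate $\rho_\ag$ is not torsion in general, so the classical Bombieri--Zannier theorem does not apply verbatim, and one needs the effective height-sensitive variant together with careful bookkeeping to promote an anomalous subtorus into a primitive sublattice with effective control on the basis size.
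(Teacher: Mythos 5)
Your reduction has two genuine gaps, and they sit exactly where the real difficulty of the theorem lies. First, the reformulation via $y_i=a_ix_i$ trades a fixed subvariety for a curve $C_{\ag}=\rho_{\ag}\cdot H_{\ag}$ whose translation point $\rho_{\ag}=(1,\dots,1,a_1,\dots,a_N)$ is \emph{not} torsion. Theorem~\ref{BZ+} is a statement about torsion curves only, with explicit dependence on the height of the defining equations of the subvariety, not on the height of a translation point of the curve; no result in the paper (and no routine ``variant'') bounds intersections of a codimension-$2$ subvariety with arbitrary $1$-dimensional cosets whose translate has growing height, and Example~\ref{exm:2} shows how badly such statements can fail once one leaves the setting of Theorem~\ref{BZ+}. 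What you defer as ``careful bookkeeping'' is therefore the crux. The paper's route is the dual one: keep the curve an honest subtorus $t\mapsto(t^{a'_1},\dots,t^{a'_N})$ and put the $\ag$-dependence into a second equation $L_2=\sum_i\gamma_ia'_ix_i$ (coming from $tf_1'$), whose height grows only like $\log D$; the explicit $h_0$-dependence in Theorem~\ref{BZ+} (through Theorem~\ref{gcd-ALS+}) is designed precisely so that $D^{1/2(N-1)}$ eventually beats $1+h_0+\log D$.

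Second, your final step — ``since $f_{\ag}(t)=F(t^{\theta_1},\dots,t^{\theta_{N-k}})$, the multiple root $\xi$ of $f_{\ag}$ comes from a multiple factor $P$ of $F$'' — is not a valid implication; it is the very content of the theorem. A squarefree $F$ can specialize to a polynomial with a multiple root (the paper's Example~\ref{exm:1} does exactly this, with $F=(1-\yg^{\bg_1})(1-\yg^{\bg_2})$), which is why the cyclotomic case must be excluded. The paper earns item~(3) by applying Theorem~\ref{gcd-ALS+} to $f_1$ and $tf_1'$, obtaining an irreducible common factor $P$ of $F_1=\psi^{\#}(L_1)$ and $F_2=\psi^{\#}(L_2)$ vanishing at $\varphi_1(\xi^d)$, and then using the Euler operator $\Delta=\sum_i\theta_i'y_i\partial_{y_i}$, for which $\Delta F_1=F_2$, to force $P^2\mid F_1$: the alternative $\Delta P=\lambda P$ would make $\varphi_1^{\#}(P)$ a monomial, contradicting that it vanishes at $\xi^d\neq0$. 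Nothing of this mechanism appears in your proposal. Relatedly, you never treat the degenerate case where $\xi$ is a (multiple) root of a proper subsum $\sum_{j\in\Lambda}\gamma_jt^{a_j}$: then the relevant point lies outside $\Wo$, Theorem~\ref{BZ+} does not apply, and removing the corresponding hypothesis~(\ref{subsums}) takes the entire second half of the paper's argument (splitting $f$ into pieces, applying Theorem~\ref{gcd-ALS+} to four polynomials, and reassembling $F$ with an extra variable). As it stands, the proposal identifies a plausible unlikely-intersection skeleton but leaves unproved both the quantitative input it needs and the passage from a common root to a multiple factor.
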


The situation is different for multiple cyclotomic roots. The
following example shows that the hypothesis that the root $\xi$ is not
cyclotomic is necessary for the conclusion of this result to hold.  

\begin{example}
\label{exm:1}
Let $\ag=(a_{1},a_{2},a_3)\in \Z^{3}$ coprime with $0< a_{1} <a_{2}$, $a_3=a_1+a_2$ and 
$a_3\gg0$. Consider the polynomial
\begin{displaymath}
f_{\ag}=1-t^{a_1}-t^{a_2}+t^{a_{1}+a_{2}}\in\Q[t^{\pm 1}],
\end{displaymath}
which has $\xi=1$ as a double root. 

In the notation in Theorem~\ref{multiple}, we have  $N=3$ and
$k=1,2$. The case $k=2$ is easily discarded since then, by the
conditions in \eqref{item:2}, the polynomial
$F$ coincides with~$f$, and so its degree cannot be bounded above
independently of $\ag$.

Hence we only have to consider the case $k=1$. Let
$\bg_{1},\bg_{2},\bg_{3},\thetag \in \Z^{2}$ with $\bg_{i}$ bounded
above and such that
\begin{equation}
  \label{eq:1}
  \langle \bg_{1},\thetag\rangle= a_{1},\quad   \langle \bg_{2},\thetag\rangle= a_{2},\quad 
  \langle \bg_{3},\thetag\rangle= a_{1}+a_{2}.
\end{equation}
Write $F=1-\yg^{\bg_1}-\yg^{\bg_2}+\yg^{\bg_{3}}\in
\Q[y_{1}^{\pm1},y_{2}^{\pm1}]$.

By \eqref{eq:1}, we have that $\theta_{1}$ and $\theta_{2}$ are
coprime and $\bg_{3} -\bg_{1}-\bg_{2}=\lambda
(\theta_{2},-\theta_{1})$ with $\lambda\in \Z$. Since $\bg_{i}$'s are
bounded above, $\bg_{3}=\bg_{1}+\bg_{2}$ and so
\begin{displaymath}
  F=1-\yg^{\bg_1}-\yg^{\bg_2}+\yg^{\bg_{1}+\bg_{2}}=(1-\yg^{\bg_1})(1-\yg^{\bg_2}).
\end{displaymath}
By the conditions in \eqref{item:2}, $\bg_{1}$ and $\bg_{2}$ are
linearly independent, and so $F$ has no multiple factor.
Hence, the presence of the double root $\xi=1$ cannot be explained in
this example as
coming from a multiple factor of a multivariate Laurent polynomial of
low degree restricted to a 1-parameter subgroup.
\end{example}

Theorem \ref{multiple} restricts the possible exponents $\ag\in\Z^{N}$ whose
associate polynomial has a multiple non-cyclotomic root, to
a finite union of proper linear subspaces of $\Z^{N}$. 
 
\begin{corollary}
  \label{cor:1}
Let $N\ge 1$ and $\gammag=(\gamma_0,\gamma_1,\ldots,\gamma_N)\in
\Qb^{N+1}$. Then the set of vectors 
 $\ag=(a_1,\ldots,a_N)\in \Z^{N}$ such that the Laurent polynomial 
$$
\gamma_0+\gamma_1t^{a_1}+\cdots+\gamma_Nt^{a_N}\in\Qb[t^{\pm1}]
$$ 
is nonzero and has a multiple non-cyclotomic root, 
is contained in a finite union of proper linear subspaces  of $\Z^{N}$.
\end{corollary}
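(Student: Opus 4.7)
The plan is to derive this corollary as a straightforward finiteness consequence of Theorem~\ref{multiple}. I would start by taking any $\ag\in\Z^N$ such that $f_{\ag}$ is nonzero and has a multiple non-cyclotomic root, and invoking Theorem~\ref{multiple} to obtain an integer $k\in\{1,\dots,N-1\}$, a vector $\thetag\in\Z^{N-k}$ and a primitive matrix $\Bg=(b_{i,j})\in\Z^{N\times(N-k)}$ with $|b_{i,j}|\le C$, where $C=C(N,\gammag)$ is the constant supplied by the theorem, such that $\ag=\Bg\cdot\thetag$.

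The key observation is that the pair $(k,\Bg)$ then varies in a finite set. Indeed, $k$ takes only finitely many values, and for each fixed $k$ there are at most $(2C+1)^{N(N-k)}$ integer matrices with entries bounded by $C$ in absolute value. For each such admissible $\Bg$, the $\Q$-linear span $V_{\Bg}:=\Bg\cdot\Q^{N-k}\subset\Q^N$ has dimension at most $N-k\le N-1$, so $V_{\Bg}\cap\Z^N$ is a proper linear subspace of $\Z^N$, and it contains $\ag$ by construction.

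Taking the union over the finitely many admissible pairs $(k,\Bg)$ then exhibits the desired finite collection of proper linear subspaces of $\Z^N$ covering every exponent vector $\ag$ whose associated polynomial has a multiple non-cyclotomic root. I expect no substantive obstacle in this deduction: once Theorem~\ref{multiple} is granted, the argument reduces to a dimension and counting bookkeeping, and the role of the corollary is simply to repackage the main theorem in a more qualitative geometric form.
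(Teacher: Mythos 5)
Your argument is correct and is precisely the deduction the paper intends: the corollary is stated as an immediate consequence of Theorem~\ref{multiple}, with $\ag=\Bg\cdot\thetag$ forcing $\ag$ into the span of the columns of one of the finitely many primitive matrices $\Bg$ with entries bounded by $C$, each span being proper since $k\ge 1$. No gap; your bookkeeping matches the paper's (implicit) proof.
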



To prove Theorem \ref{multiple}, we give a version of a theorem of
Bombieri and Zannier on the intersection of a subvariety of
codimension 2 of the multiplicative group with all the torsion curves,
with bounds having an explicit dependence on the height of the
subvariety (Theorem~\ref{BZ+}).  This allows us to prove a general
result concerning the greatest common divisor of two sparse
polynomials with coefficients of low height
(Theorem~\ref{gcd-ALS+}). These two theorems are presented in
\S~\ref{notation} and proved in \S~\ref{proof-BZ+} and
\S~\ref{proof-gcd-ALS+}, respectively.  Theorem~\ref{multiple} is an
easy consequence of the latter result, as shown in
\S~\ref{proof-multiple}.  Theorem~\ref{gcd-ALS+} is also used in
\S~\ref{proof-pirola} to prove Theorem~\ref{pirola},  giving some
evidence on a conjecture of Bolognesi and Pirola
\cite{BolognesiPirola:osde}.

\medskip \noindent {\bf Acknowledgments.}  Part of this work was done
while the authors met at the Scuola Normale Superiore (Pisa), the
Universitat de Barcelona, and the Université de Caen. We thank these
institutions for their hospitality.

\section{Intersections of subvarieties with torsion curves and gcd of sparse
  polynomials of low height}
\label{notation}

We first recall some definitions and basic facts.  Boldface letters
denote finite sets or sequences of objects, whose the type and number
should be clear from the context: for instance, $\xg$ might denote the
group of variables $(x_1,\dots,x_n)$, so that $\Qb[\xg^{\pm1}]$
denotes the ring of Laurent polynomials
$\Qb[x_1^{\pm1},\dots,x_n^{\pm1}]$. Given a vector
$\ag=(a_1,\ldots,a_N)\in\Z^N$ we set
\begin{displaymath}
  \vert\ag\vert=\max_{j}\vert
a_j\vert.
\end{displaymath}
Given a group
homomorphism $\varphi\colon \Gm^{n}\to \Gm^{N}$,  there exist unique vectors
$\bg_1,\ldots,\bg_N\in\Z^n$ such that
$\varphi(\xg)=(\xg^{\bg_1},\ldots,\xg^{\bg_N})$ for all $\xg\in
\Gm^{n}$. We define the \emph{size} of $\varphi$ as
\begin{displaymath}
  \size(\varphi)= \max_j|\bg_{j}|
\end{displaymath}
We also denote by
\begin{displaymath}
 \varphi^{\#}\colon \Qb[y_{1}^{\pm1},\dots,
 y_{N}^{\pm1}]\longrightarrow \Qb[x_{1}^{\pm1},\dots, x_{n}^{\pm1}]  ,
 \quad y_{i}\longmapsto \xg^{\bg_{i}}
\end{displaymath}
the associated morphism of algebras. If $\psi\colon \Gm^{N}\to
\Gm^{M}$ is a further homomorphism, then $(\psi\circ\varphi)^{\#}=
\varphi^{\#}\circ \psi^{\#}$.

Let $D\ge 1$ and $f_1$, $f_2\in \Z[t]$ polynomials of degree $\leq D$
with fixed coefficients and fixed number of nonzero terms. Filaseta,
Granville and Schinzel have shown that, if either $f_1$ or $f_2$ do
not vanish at any root of unity, then the greatest common divisor
$\gcd(f_1,f_2)$ can be computed in time polynomial in $\log (D)$
\cite{FilasetaGranvilleSchinzel:igcdasp}.  More recently, Amoroso,
Leroux and Sombra gave an improved version of this result
\cite{ALS:oslpe}. The following is its precise statement.

\begin{theorem}[\cite{ALS:oslpe}, Theorem 4.3]
\label{gcd-ALS} There is an algorithm that, given a number field $\K $
and polynomials $f_1$, $f_2\in \K [t]$, computes a polynomial $p\in \K
[t]$ dividing $\gcd(f_1,f_2)$ and such that $\gcd(f_1,f_2)/p$ is a product of cyclotomic
polynomials. 

If both $f_1$ and $f_2$ have  degree
bounded by $D$, height bounded by $h_0$ and number of nonzero coefficients
bounded by $N$, this computation is done with $O_{\K ,N,h_0}(\log
(D))$ bit operations. 
\end{theorem}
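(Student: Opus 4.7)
The plan is to decompose the proof into a structural bound and an algorithmic extraction. The structural bound will guarantee that the non-cyclotomic part of $\gcd(f_1,f_2)$ has degree controlled by $N$, $h_0$ and $[\K:\Q]$ alone; the algorithmic step then identifies it without manipulating any object of size proportional to $D$.

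First I would establish the structural bound via an unlikely intersection argument. Writing $f_i=\sum_{j=0}^{N-1}\gamma_{i,j}t^{a_{i,j}}$ and viewing any common root $\xi\in\Qb^{\times}$ as producing the point $(\xi^{a_{1,j}},\xi^{a_{2,j}})_{j}\in\Gm^{2N}$, this point lies on the subvariety $V$ cut out by the two linear forms $\sum_{j}\gamma_{i,j}y_{i,j}=0$, which has codimension at least $2$ once the two forms are not proportional. A Bombieri--Zannier-type statement on the intersection of a subvariety of codimension $\ge 2$ with torsion curves then bounds the number of non-torsion intersection points by a constant depending on $N$, $[\K:\Q]$ and the height of $V$ (and ultimately on $h_0$). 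Consequently the set of non-cyclotomic common roots of $f_1,f_2$ is finite with cardinality bounded by some $C(N,h_0,[\K:\Q])$, and the non-cyclotomic part $p$ of $\gcd(f_1,f_2)$ has bounded degree and bounded height.

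Next I would recover $p$ by a bounded enumerate-and-test search. By Northcott's theorem the non-cyclotomic common roots live in an effectively enumerable finite set of algebraic numbers of bounded height and degree, and their minimal polynomials over $\K$ form a finite, effectively computable list $\{q_1,\dots,q_s\}$. For each candidate $q_r$ the test ``$q_r\mid f_i$'' reduces to checking whether $f_i(t)\equiv 0\pmod{q_r(t)}$, which one does by computing each $t^{a_{i,j}}\bmod q_r$ via repeated squaring in $\K[t]/(q_r)$, taking $O(\log D)$ ring operations of bounded cost per term. Iterating with $q_r^k$ determines the exact multiplicity of $q_r$ in $f_i$, hence in $\gcd(f_1,f_2)$; one then outputs the product of the $q_r$ raised to these multiplicities.

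The main obstacle is making the structural step fully effective with an \emph{explicit} dependence on the height of $V$: the standard Bombieri--Zannier theorems are essentially qualitative. Either one develops a height-explicit version (precisely the content of Theorem~\ref{BZ+} developed later in this paper), or one replaces the intersection bound by direct arguments based on an effective subspace theorem or linear forms in logarithms. Once that bound is in place, the algorithm reduces to modular exponentiation, and the claimed $O_{\K,N,h_0}(\log D)$ bit complexity follows.
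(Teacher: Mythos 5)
There is a genuine gap, and it sits at the heart of your structural step. You claim that a Bombieri--Zannier type theorem bounds the \emph{number} of non-torsion points of the codimension-$2$ variety $V$ lying on torsion curves, and you conclude that the non-cyclotomic part of $\gcd(f_1,f_2)$ has degree and height bounded in terms of $N$, $h_0$ and $[\K:\Q]$ alone. Both claims are false. The theorem of Bombieri--Zannier (Theorem~\ref{BZ} in this paper) does not bound the number of such points; it asserts that each one satisfies a multiplicative relation $\prod_j(\zeta_j\xi^{a_j})^{b_j}=1$ with exponents $b_j$ bounded in terms of the defining polynomials, i.e.\ that the point lies on a torsion hypersurface of bounded degree. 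And the non-cyclotomic part of the gcd is in general \emph{not} of bounded degree: take $f_1=t^{2a}-4$ and $f_2=t^{a}-2$, which have fixed coefficients, at most three terms, and degree $D=2a$; here $\gcd(f_1,f_2)=t^{a}-2$, which has no cyclotomic factor and degree $a\to\infty$. Its roots $2^{1/a}\zeta$ have unbounded degree over $\Q$ (and height tending to $0$), so your Northcott enumerate-and-test step collapses: there is no finite, height-and-degree-bounded list of candidate minimal polynomials $q_r$ to test, and the output $p$ cannot even be written as a dense polynomial within $O(\log D)$ bits.

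The correct use of the Bombieri--Zannier input, which is what \cite{ALS:oslpe} does and what this paper recalls right after the statement, is a \emph{descent} rather than a counting argument: the bounded relations $\sum_j a_jb_j=0$ force the exponent vector $\ag$ into one of finitely many proper subtori of bounded degree, so the monomial map $\varphi(t)=(t^{a_1},\dots,t^{a_N})$ factors as $\psi\circ\varphi_1$ with $\psi\colon\Gm^{N-k}\to\Gm^N$ injective of bounded size and $\varphi_1$ of size $O(D)$; one then computes the multivariate gcd $G=\gcd(\psi^{\#}(L_1),\psi^{\#}(L_2))$, which has bounded degree and height since the $L_i$ are linear and $\psi$ is small, and outputs $p=\varphi_1^{\#}(G)$. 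This $p$ is sparse (boundedly many terms, exponents of size $O(D)$), which is why it is representable and computable in $O_{\K,N,h_0}(\log D)$ bit operations, and the quotient $\gcd(f_1,f_2)/p$ is a product of cyclotomic polynomials because the only common roots not accounted for by $G$ are torsion. Your reduction to a codimension-$2$ linear subvariety is the right starting point, but the conclusion you need from it is the bounded multiplicative relation (and hence the factorization of $\varphi$ through a subtorus), not finiteness of the intersection.
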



In more detail, write 
\begin{equation*}
f_i=\gamma_{i,0}+\gamma_{i,1}t^{a_1}+\cdots+\gamma_{i,N}t^{a_N}\in
\K[t], \quad i=1,2, 
\end{equation*}
with $a_{j}\in \Z$ and $\gamma_{i,j}\in \K$. 
Denote by $\varphi\colon\Gm\to\Gm^N$ the homomorphism given by
$\varphi(t)=(t^{a_1},\ldots,t^{a_N})$ and  set
\begin{equation*}
L_i=\gamma_{i,0}+\gamma_{i,1}x_1+\cdots+\gamma_{i,N}x_N, \quad i=1,2, 
\end{equation*}
so that $f_i=\varphi^\#(F_i)$.  Then, the algorithm underlying
Theorem~\ref{gcd-ALS} computes an integer $0 \le k\le N-1$ and two
homomorphisms $\psi\colon\Gm^{N-k}\to\Gm^N$ and
$\varphi_1\colon\Gm\to\Gm^{N-k}$ with $\psi$ injective, such that
$\psi\circ\varphi_1=\varphi$ and
\begin{displaymath}
p=\varphi_1^\#(\gcd(\psi^\#(L_1),\psi^\#(L_2))).  
\end{displaymath}
Moreover, the size
of $\psi$ and $\varphi_{1}$ is respectively bounded by $B$ and $BD$,
where $B$ is a constant depending only on $\K$, $N$ and $h_0$.

This algorithm relies heavily on a former conjecture of Schinzel on
the intersection of a subvariety of the multiplicative group with
1-parameter subgroups. This conjecture was proved by Bombieri and
Zannier in \cite[Appendix]{Schinzel:psrr}.  For the
reader's convenience, we recall an improved version of this result.

\begin{theorem}[\cite{BombieriMasserZannier:assta}, Theorem 4.1]
\label{BZ}
Let $N\ge 1$ and $P, Q\in \Qb[x_{1},\dots, x_{N}] $ coprime
polynomials. Then there exists an effectively computable constant $B$
depending only on $P$ and $Q$ with the following property.

Let $a_j\in \Z$, $j=1,\dots, N$, $\zeta_j\in \upmu_{\infty}$ and
$\xi\in \C^{\times}$ with
$$
P(\zeta_1\xi^{a_1},...,\zeta_N\xi^{a_N}) = Q(\zeta_1\xi^{a_1},...,\zeta_N\xi^{a_N}) = 0.
$$ 
Then there exist  $b_j\in \Z$, $j=1,\dots, N$,  with $0 < \max_{j}|b_j| \leq B$ and
$$
\prod_{j=1}^{N}(\zeta_j\xi^{a_j})^{b_j}= 1.
$$ 
In particular, if $\xi\notin \upmu_{\infty}$, then $\sum_{j=1}^{N}a_jb_j=0$.
\end{theorem}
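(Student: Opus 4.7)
The plan is to deduce Theorem~\ref{BZ} from the Bombieri--Masser--Zannier theorem on anomalous intersections of fixed subvarieties of $\Gm^N$ with torsion curves. Set $V=V(P,Q)\subset\Gm^N$. Since $P$ and $Q$ are coprime, $V$ has codimension at least~$2$. For each pair $(\bfzeta,\ag)\in\upmu_\infty^N\times\Z^N$, denote by $C_{\bfzeta,\ag}$ the torsion curve parametrised by $t\mapsto(\zeta_1t^{a_1},\dots,\zeta_Nt^{a_N})$. The hypothesis says precisely that $\bfp:=(\zeta_1\xi^{a_1},\dots,\zeta_N\xi^{a_N})$ lies in $V\cap C_{\bfzeta,\ag}$, and since $\dim V+\dim C_{\bfzeta,\ag}\le N-1<N$, this is an unlikely intersection.

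The first and main step is a structure theorem for the set $W$ of points of $V$ lying on some torsion curve. The Bombieri--Masser--Zannier theorem in codimension~$2$ guarantees that $W$ is contained in a finite union $\bigcup_i\tau_iH_i$ of torsion cosets of proper algebraic subgroups $H_i\subsetneq\Gm^N$, the orders of the $\tau_i$ and the defining characters of the $H_i$ being effectively bounded in terms of $P$ and $Q$. The quantitative form rests on Lehmer-type lower bounds for the normalised height on subvarieties of $\Gm^N$, combined with a dimension induction via projection to a suitable quotient subtorus.

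The second step translates the membership $\bfp\in\tau_iH_i$ into the desired multiplicative relation. Each $H_i$ is cut out by a nontrivial character $\xg\mapsto\xg^{\bg'}$ with $\bg'\in\Z^N\setminus\{\bfzero\}$ satisfying $|\bg'|\le B_1$, and then $\bfp^{\bg'}=\tau_i^{\bg'}$ is a root of unity of order at most some $B_2$. Setting $M$ equal to this order and $\bg:=M\bg'$, we obtain $0<|\bg|\le B_1B_2=:B$ and $\prod_{j=1}^N(\zeta_j\xi^{a_j})^{b_j}=1$, which is the first conclusion. Rewriting this as $\prod_j\zeta_j^{b_j}\cdot\xi^{\sum_ja_jb_j}=1$ and using that the first factor is itself a root of unity, we see that $\xi^{\sum a_jb_j}\in\upmu_\infty$; when $\xi\notin\upmu_\infty$ this forces $\sum a_jb_j=0$, completing the deduction.

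The main obstacle is the effectivity of the first step. The qualitative finiteness of the $H_i$ was already established in the appendix to~\cite{Schinzel:psrr}, but extracting an explicit bound on $B$ in terms of the coefficients and supports of $P$ and $Q$ demands quantitative height machinery. The delicate point is that no \emph{a priori} bound on the orders of the torsion translations $\tau_i$ follows from the degree of $V$ alone, so one needs a Bogomolov-type estimate for the essential minimum of $V$ together with a careful descent on the dimension, as carried out in~\cite{BombieriMasserZannier:assta}.
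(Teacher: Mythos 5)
Note first that the paper does not prove Theorem~\ref{BZ} at all: it is quoted verbatim, with a citation, as Theorem~4.1 of \cite{BombieriMasserZannier:assta} (the qualitative original being the Bombieri--Zannier appendix to \cite{Schinzel:psrr}); the only related argument the paper actually carries out is the proof of the refinement Theorem~\ref{BZ+} in \S~\ref{proof-BZ+}. Measured as a proof, your proposal has a genuine gap of circularity: your ``first and main step'' --- that the set $W$ of points of $V(P,Q)$ lying on torsion curves is contained in a finite union of torsion cosets whose defining characters and torsion orders are effectively bounded in terms of $P$ and $Q$ --- is logically equivalent to Theorem~\ref{BZ} itself, since a point $\bfp$ satisfies $\bfp^{\bg}=1$ for some $\bg$ with $0<\vert\bg\vert\le B$ exactly when it lies in one of the finitely many algebraic subgroups $\{\xg^{\bg}=1\}$, $\vert\bg\vert\le B$, each of which is a finite union of torsion cosets with bounded data. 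Attributing that step to ``the Bombieri--Masser--Zannier theorem in codimension~$2$'' therefore amounts to citing the very statement to be proved; the BMZ ingredients one could legitimately invoke short of it (bounded height on $\Xo$, the structure theorem for anomalous subvarieties, Dobrowolski-type bounds) do not yield the uniform structure statement without the actual argument: embed the torsion curve in a $2$-dimensional torsion coset of controlled degree by geometry of numbers, combine B\'ezout with the relative Dobrowolski bound of \cite{AmorosoZannier:rDlbae} and the bounded height theorem when the point is an isolated component of the intersection, and use the anomalous-structure theorem to handle points on positive-dimensional components --- precisely the route the paper follows for Theorem~\ref{BZ+}.

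The surrounding reductions in your write-up are correct but routine: coprimality of $P$ and $Q$ gives $\codim V\ge 2$; membership in a torsion coset with bounded character and bounded torsion order yields the relation after multiplying the character by that order; and $\prod_j\zeta_j^{b_j}\cdot\xi^{\sum_j a_jb_j}=1$ with $\xi\notin\upmu_{\infty}$ forces $\sum_j a_jb_j=0$. One smaller inaccuracy in your closing paragraph: the effectivity in \cite{BombieriMasserZannier:assta} does not rest on a Bogomolov-type bound for the essential minimum of $V$, but on the relative (Lehmer-type over abelian extensions) Dobrowolski bound applied to the algebraic point $\xi$, played against the bounded height theorem on $\Xo$; describing the mechanism correctly matters here, because it is exactly this interplay that the paper later makes explicit in the height $h_0$ to obtain Theorem~\ref{BZ+}.
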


We are interested in extension of Theorem~\ref{gcd-ALS} to polynomials
$f_1$, $f_2$ having low, but unbounded, height. To this end, we need
first a version of Theorem~\ref{BZ} with explicit dependence on the
height of the input polynomials $P$ and $Q$.

As already remarked by Schinzel, the constant $B$ in this theorem
cannot depend only on $N$, on the field of definition and on the
degrees of $P$ and $Q$. For instance, for the data
\begin{equation*}
N=2, \quad P(x,y)=x-2 , \quad 
Q(x,y)=y-2^a \quad \text{ and } \quad (\zeta_1\xi^{a_1},\zeta_2\xi^{a_2})=(2,2^a)  ,
\end{equation*}
one has $B(P,Q)\geq a$. I delete the reference to~\cite[page
 7]{BombieriMasserZannier:assta} because it does not give any further
 details with respect to the example. Maybe we should add the
 reference to the paper of Schinzel with this example, but I don't
 have this reference.

The following result gives, under some restrictive hypothesis, the
dependence of the constant $B$ on the height of the input
polynomials. Recall that a \emph{coset} of $\Gm^N$ is a translate of a
subtorus, and that a \emph{torsion coset} is a translate of a subtorus
by a torsion point. A \emph{torsion curve} (respectively, a
\emph{torsion hypersurface}) is a torsion coset of dimension 1
(respectively, of codimension
1). Following~\cite{BombieriZannier:apsG}), given a subvariety~$\X$ of
$\Gm^N$, we denote by $\Xo$ the complement in $\X$ of the union of all
cosets of positive dimension contained in $\X$.

We consider the standard compactification of the multiplicative group
given by the inclusion
\begin{displaymath}
 \iota\colon
\Gm^N\hooklongrightarrow\P^N \quad, \quad (x_{1},\dots, x_{N})\longmapsto
(1:x_{1}:\cdots:x_{N}).  
\end{displaymath}
We define the \emph{degree} of an irreducible subvariety
$\X$ of $\Gm^{N}$, denoted by $\deg(\X)$, as the degree of the Zariski
closure $\ov{\iota(\X)}\subset \P^N$, and the \emph{height} of a point
$\xi\in \Gm^{N}$, denoted by $\h(\xi)$, as the Weil height of the
projective point $\iota(\xi)\in \P^{N}$.

\begin{theorem}
\label{BZ+}
Let $\X\subset\Gm^N$ be a subvariety defined over a number field of degree $\delta$ by polynomials of
degree bounded by $ d_0$ and
height bounded by $ h_0$. Let $0< \varepsilon<1$. Then
there exists {an effectively computable} constant $B$  depending only on $N$,
$d_0$, $\delta$ and $\varepsilon$, with the following property. 

Let $\W$ be an irreducible component of $\X$ of codimension at least
$2$, $\T$ a torsion curve and $\xg\in\Wo\cap \T$ a non-torsion point.
Then either
$$
\deg(\T)^{\frac{1-\varepsilon}{N-1}}\leq B\cdot (1+h_0)
$$
or there exists   a torsion hypersurface $\Tp$ with 
 $\xg\in \Tp$ and $\deg(\Tp)\le B$.
\end{theorem}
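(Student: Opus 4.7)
The plan is to argue by contraposition: assume that no torsion hypersurface $\Tp$ of degree at most $B$ passes through $\xg$, and deduce the claimed bound on $\deg(\T)$. The proof rests on three independent estimates that one combines to squeeze out the desired inequality: a quantitative \emph{upper} bound on the height of $\xg$ coming from its membership in $\Wo$, a \emph{lower} bound for $\h(\xg)$ coming from the special structure of $\xg$ as a non-torsion point on a torsion curve, and an Amoroso--David-type Lehmer lower bound for the scalar parameter of the torsion curve.

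For the geometric setup, fix a primitive parameterization $\T = \bfzeta\cdot\{(t^{a_1},\dots,t^{a_N}) : t\in\Gm\}$ with $\bfzeta\in\upmu_\infty^N$ and $\gcd(a_1,\dots,a_N)=1$; then $\deg(\T) \asymp_N |\ag|$. The non-torsion intersection point is of the form $\xg = \bfzeta\cdot(\xi^{a_1},\dots,\xi^{a_N})$ for some non-torsion $\xi\in\Qb^{\times}$, and a direct computation with the local Weil heights (using the product formula to collect the positive and negative exponents separately) yields
$$
\h(\xg) \ge c_N\,|\ag|\,\h(\xi).
$$
The critical upper bound is to be obtained from a quantitative form of Zhang's theorem on the essential minimum, or equivalently from the Amoroso--David lower bound for the essential minimum of a subvariety avoiding torsion cosets, applied to the component $\W$: the assumptions $\xg\in\Wo$ and the contrapositive hypothesis that no torsion hypersurface of degree $\le B$ contains $\xg$ should force $\xg$ out of the exceptional locus of that inequality, yielding
$$
\h(\xg) \le B_1\cdot(1+h_0)
$$
for an effective constant $B_1$ depending only on $N$, $d_0$, $\delta$ and $\varepsilon$, provided $B$ is chosen sufficiently large in terms of these data.

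To close the argument, I would apply an Amoroso--David Lehmer bound $\h(\xi) \ge c(\delta)\,[\Q(\xi):\Q]^{-(1+\varepsilon')}$, together with a control of $[\Q(\xi):\Q]$ in terms of $|\ag|$ and $\delta$ (since $\xi$ is recovered from any nonzero coordinate of $\xg$ by an $|a_i|$-th root extraction over a cyclotomic extension of the residue field of $\xg$, whose degree is itself controlled by $\delta$ and $\deg(\X)$). Combining the two-sided inequality on $\h(\xg)$ with the Lehmer estimate and absorbing the various small losses into $\varepsilon$ should produce a polynomial bound of the form $|\ag|^{1-\varepsilon} \le B_2\,(1+h_0)^{N-1}$, which is exactly the conclusion $\deg(\T)^{(1-\varepsilon)/(N-1)} \le B\,(1+h_0)$ through the relation $\deg(\T) \asymp_N |\ag|$.

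The main obstacle, and where the argument hinges, is the quantitative height upper bound on $\xg$. One needs a version of Zhang's (or Amoroso--David's) essential minimum inequality for irreducible components of codimension at least two with \emph{fully explicit} linear dependence on the arithmetic height $h_0$ of the defining equations of $\X$, and one must match the exceptional locus of that inequality to the positive-dimensional torsion cosets through $\xg$, so that the contrapositive assumption genuinely forces $\xg$ out of it. This is expected to proceed via an arithmetic Bezout of Philippon type applied to the defining equations of $\X$, combined with the codimension-two structure of $\W$ and an induction on $N$ reminiscent of the original Bombieri--Zannier argument from the appendix to \cite{Schinzel:psrr}; bookkeeping the exact dependence of all constants on $h_0$, $d_0$, $\delta$ and $\varepsilon$ through these steps is the part that will require the most delicate work.
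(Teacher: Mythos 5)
Your plan cannot close quantitatively, and the gap is exactly where you locate the ``delicate bookkeeping''. You bound the degree of $\xi$ by roughly $|\ag|$ (root extraction, or equivalently B\'ezout of $\X$ with the curve $\T$ itself, which has degree $|\ag|$). Feeding $\D\lesssim|\ag|$ into a Lehmer--Dobrowolski bound $\h(\xi)\ge c\,\D^{-1-\varepsilon}$ and into the two-sided estimate $|\ag|\,\h(\xi)\le N\h(\xg)\le NB_1(1+h_0)$ yields only $|\ag|^{-\varepsilon}\lesssim 1+h_0$, which is vacuous: no bound on $\deg(\T)$ follows, and no absorption of losses into $\varepsilon$ can produce the exponent $\frac{1-\varepsilon}{N-1}$. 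The paper obtains the indispensable power saving from a geometry-of-numbers step that is absent from your proposal: one constructs a \emph{two-dimensional torsion coset} $T_2\supseteq\T$ with $\deg(T_2)\le B_1|\ag|^{\frac{N-2}{N-1}}$, and, when $\xg$ is an isolated point of $\X\cap T_2$, B\'ezout gives $\D=[\E(\xi):\E]\le B_1|\ag|^{\frac{N-2}{N-1}}\deg(\X)$, where $\E$ is a cyclotomic extension of a field of definition. One then needs the \emph{relative} Dobrowolski bound of Amoroso--Zannier over abelian extensions, not the absolute Amoroso--David bound in terms of $[\Q(\xi):\Q]$ that you invoke: the torsion coordinates $\zeta_i$ of $\T$ have unbounded order, so the relevant degree over $\Q$ is not controlled, whereas the degree over the cyclotomic field $\E$ is exactly what B\'ezout bounds. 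With these two ingredients the exponent $\frac{1-(N-2)\varepsilon}{N-1}$ drops out; with yours, nothing does.

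The second alternative of the theorem is also not produced by your argument. The contrapositive hypothesis that no torsion hypersurface of degree $\le B$ passes through $\xg$ does not interact with the height upper bound: the correct tool there is the bounded height theorem (Theorem~1 of the appendix to Schinzel's book, or Habegger's effective version combined with arithmetic B\'ezout), not Zhang's essential minimum, which constrains generic points of $\X$ rather than the particular point $\xg$; and its exceptional locus is $\X\setminus\Xo$, which is already excluded by the hypothesis $\xg\in\Wo$. In the paper, the hypersurface $\Tp$ arises from the other branch of the dichotomy created by $T_2$: if $\xg$ lies on a positive-dimensional component of $\X\cap T_2$, that component is $\X$-anomalous, the Bombieri--Masser--Zannier structure theorem places it in a coset $\bfg H$ whose degree is bounded in terms of $\delta$ and $\deg(\X)$ only --- crucially independent of $h_0$ and of $\ag$, an independence the application to Theorem~\ref{gcd-ALS+} genuinely requires (see Remark~\ref{rem:1}) --- and since this coset must contain a two-dimensional component of $T_2$ it is a torsion coset, hence contained in a torsion hypersurface of bounded degree. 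Without the auxiliary coset $T_2$ and this anomalous-case analysis, both the exponent in the first alternative and the existence of $\Tp$ in the second are out of reach of your argument.
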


\begin{remark} 
\label{comment}
  We might restate Theorem~\ref{BZ+} in a slightly different way in the
  case when the torsion curve $\T$ is a subtorus. Let
  $\varphi\colon\Gm\to\Gm^N$ be an injective homomorphism and keep $\X$, $\W$ and
  $\varepsilon$ as in the statement of the theorem.  Let
  $\xi\in\Qb^{\times}\setminus \upmu_{\infty}$ such that
  $\varphi(\xi)\in\Wo$. In this situation, Theorem~\ref{BZ+}
can be reformulated to the statement that, if
$$
\size(\varphi) ^{\frac{1-\varepsilon}{N-1}}> B\cdot (1+h_0),
$$ 
then $\xg$ is contained in a subtorus $\Tp$ of codimension $1$ and
degree bounded by~$ B$.


Indeed, 
Theorem \ref{BZ+} applied to  the subtorus  $\T=\Im(\varphi)$, shows that $\varphi(\xi)\in \Tp$ for a torsion
hypersurface $\Tp$ of degree bounded by $B$. This torsion hypersurface is
defined by the single equation $\xg^\bg=\omega$ for some $\bg\in\Z^N$
with $\vert\bg\vert\leq B$ and $\omega\in \upmu_{\infty}$. Write 
$\varphi(t)=(t^{a_1},\ldots,t^{a_N})$ with $a_{i}\in\Z$. Then
$$
\xi^{a_1b_1+\cdots+a_Nb_N}=\omega.
$$
Since $\xi$ is not torsion,  $\sum_{j}a_jb_j=0$ and
$\omega=1$. Hence, $\Tp$ is a subtorus and
$\Im(\varphi)\subseteq \Tp$.
\end{remark}

The following variant of Schinzel's example shows that the hypothesis
that $\xg\in\Xo$ is necessary for the conclusion of Theorem \ref{BZ+}
to hold.

\begin{example}
\label{exm:2}
Let $1\le a \le b$ and consider the irreducible subvariety
\begin{displaymath}
\X=\{(2,2^a)\}\times\Gm\subset \Gm^{3}.    
\end{displaymath}
With notation as in Theorem \ref{BZ+}, we have $N=3$, $d_0=1$ and
$h_0\approx a$. Since $\X$ is a coset of positive dimension,
$\Xo=\emptyset$. Let $\T\subset \Gm^{3}$ be the subtorus
parameterized by $t\mapsto (t,t^a,t^b)$ and pick the point
$\xg=(2,2^a,2^b)\in \X\cap T$. It is easy to verify that, for any
fixed $0< \varepsilon<1$ and $B>0$, if $a$ and $b/a$ are
sufficiently large, then neither $\deg (\T)^{\frac{1-\e}{2}}\leq B\cdot
(1+h_0)$ nor $\xg\in \Tp$ for any torsion hypersurface of degree
bounded by $ B$.
\end{example}

Theorem~\ref{BZ+} allows us to prove the desired extension of
Theorem~\ref{gcd-ALS} to polynomials of low height. The following statement
gives the quantitative aspects of this result. 

\begin{theorem}
\label{gcd-ALS+} 
Let $\K $ be a
number field of degree $\delta$. For a family of elements $\gamma_{i,j}\in \K$, $i=1,\dots, s$, 
$j=1,\dots,N$, and a sequence of $N\ge 1$ coprime integers $a_1,\dots,a_N$, 
we consider the system of Laurent polynomials
$$
f_i=\gamma_{i,0}+\gamma_{i,1}t^{a_1}+\cdots+\gamma_{i,N}t^{a_N}, \qquad i=1,\ldots,s.
$$
We assume $f_1,\ldots,f_s$ not all zeros. Set
$$
L_i=\gamma_{i,0}+\gamma_{i,1}x_1+\cdots+\gamma_{i,N}x_N, \qquad i=1,\ldots,s,
$$ 
and let $\varphi\colon\Gm\to\Gm^N$ be the homomorphism given by
$\varphi(t)=(t^{a_1},\ldots,t^{a_N})$. Put $D=\vert\ag\vert$ and
$h_{0}=\max_{i,j} \h(\gamma_{i,j})$.

Then there exists {an effectively computable} constant $B'$ depending only on $N$ and $\delta$,
with the following property. If
\begin{equation}
\label{main-inequa}
D^{\frac{1}{2(N-1)}}>B'\cdot (1+h_0),
\end{equation}
then there exist $0 \le k\le N-1$ and homomorphisms
\begin{displaymath}
\psi\colon\Gm^{N-k}\to\Gm^N \quad \text{ and } \quad \varphi_1\colon\Gm\to\Gm^{N-k}  
\end{displaymath}
such that
\begin{enumerate}
\item \label{item:7} $\psi$ is injective and $\psi\circ\varphi_1=\varphi$;
\item \label{item:8} $\size(\psi)\le B'$ and $\size(\varphi_1) \le  B'D$;
\item \label{item:9} Set 
$$
G=\gcd(\psi^\#(L_1),\ldots\psi^\#(L_s)) \quad \text{ and } \quad g=\varphi_1^\#(G).
$$ 
Then $g\mid\gcd(f_1,\ldots,f_s)$. 
Moreover, if $\xi$ is a root of
$\gcd(f_1,\ldots,f_s)/g$, then either $\xi\in \upmu_\infty$ or there
exists a nonempty proper subset $\Lambda\subset\{1,\ldots,N\}$ such
that $\gamma_{i,0}+\sum_{j\in\Lambda}\gamma_{i,j}\xi^{a_j}=0$,
$i=1,\ldots,s$.
\end{enumerate}
\end{theorem}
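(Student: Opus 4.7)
The plan is to translate the problem into geometry: $\xi \in \Qb^\times$ is a root of $\gcd(f_1,\dots,f_s)$ exactly when $\varphi(\xi)\in\X$, where $\X=V(L_1,\dots,L_s)\subset\Gm^N$ is the affine linear subvariety cut out by the $L_i$'s. Since the $L_i$'s are linear, $\X$ is irreducible. The key observation is that, for $\xi\notin \upmu_\infty$, the $\Lambda$-condition in~\eqref{item:9} holds for some nonempty proper $\Lambda\subset\{1,\dots,N\}$ if and only if $\varphi(\xi)$ lies on a positive-dimensional coset contained in $\X$, i.e.\ $\varphi(\xi)\in\X\setminus\Xo$. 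Indeed, on such a coset the monomials $1,x_1,\dots,x_N$ group into at least two equivalence classes of characters, and the vanishing of each $L_i$ on the coset decomposes class by class; taking $\Lambda$ to be the class of $1$ with the index of the constant term removed yields the desired subset. The task therefore reduces to producing a subtorus $\cH\supseteq\T=\Im(\varphi)$ of bounded size through which $\varphi$ factors, and then matching multiplicities at points of $\Xo$.

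I would first dispose of the trivial case $\mathrm{codim}(\X)\le 1$: then the $L_i$'s are proportional, $\gcd(f_1,\dots,f_s)$ is a scalar multiple of some $f_i$, and the statement holds with $k=0$, $\psi=\mathrm{id}$, $\varphi_1=\varphi$. So assume $\mathrm{codim}(\X)\ge 2$. Apply Remark~\ref{comment} (the reformulation of Theorem~\ref{BZ+}) with $\varepsilon=1/2$ and $d_0=1$; after absorbing the constant it produces into $B'$, hypothesis~\eqref{main-inequa} yields: for every non-torsion $\xi$ with $\varphi(\xi)\in\Xo$, there is a codimension-$1$ subtorus $\Tp_\xi\supseteq\T$ of degree at most $B$. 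Since there are only finitely many codimension-$1$ subtori of $\Gm^N$ of degree $\le B$, setting $\cH$ to be the common intersection of all such $\Tp_\xi$ (or $\cH=\Gm^N$ if the list is empty) gives a subtorus $\cH\supseteq\T$ whose character lattice is generated by vectors of $\ell^\infty$-norm at most $B$. A Minkowski-type basis reduction then yields an injective parameterization $\psi\colon\Gm^{N-k}\to\cH$ with $\size(\psi)\le B'$.

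The inclusion $\T\subseteq\cH$ forces the factorization $\varphi=\psi\circ\varphi_1$; the exponents of $\varphi_1$ solve a linear system determined by $\psi$ with right-hand side $\ag$, and Cramer's rule gives $\size(\varphi_1)\le B'D$ after enlarging $B'$. Setting $G=\gcd(\psi^\#(L_1),\dots,\psi^\#(L_s))$ and $g=\varphi_1^\#(G)$, the divisibility $g\mid\gcd(f_1,\dots,f_s)$ follows immediately from $\varphi^\#=\varphi_1^\#\circ\psi^\#$ applied to each factorization $\psi^\#(L_i)=G\cdot H_i$. Now let $\xi$ be a root of $\gcd(f_1,\dots,f_s)/g$ that is neither torsion nor satisfies any $\Lambda$-condition. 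By the equivalence from the first paragraph, $\varphi(\xi)\in\Xo$; by construction of $\cH$, this gives $\varphi(\xi)\in\cH$, and hence $\varphi_1(\xi)\in V(G)$. A transversality argument then equates $\mathrm{mult}_\xi(f_i)$ with $\mathrm{mult}_{\varphi_1(\xi)}(\psi^\#(L_i))$ for each $i$, contradicting the fact that $\xi$ is a root of the quotient.

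The main obstacle I anticipate is this last multiplicity-matching step. The inclusion $\T\subseteq\cH$ automatically yields $\mathrm{mult}_\xi(f_i)\ge \mathrm{mult}_{\varphi_1(\xi)}(\psi^\#(L_i))$, but strict inequality could a priori occur at points where $\cH$ meets $V(L_i)$ transversally in a direction along which $\T$ is tangent. The hope is that this is ruled out precisely by $\varphi(\xi)\in\Xo$: since no positive-dimensional coset of $\X$ passes through $\varphi(\xi)$, and $\cH$ is built from codimension-$1$ subtori coming from Theorem~\ref{BZ+}, the ``extra'' tangent directions of $\cH$ at $\varphi(\xi)$ should be transverse to each $V(L_i)$. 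Making this heuristic precise will likely require a careful local analysis combining the linear structure of the $V(L_i)$'s with the arithmetic origin of $\cH$.
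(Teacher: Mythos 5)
There is a genuine gap, and it sits exactly where your argument diverges from the paper: you apply Theorem~\ref{BZ+} (via Remark~\ref{comment}) \emph{once}, in the ambient $\Gm^N$, intersect the resulting codimension-$1$ subtori into a single subtorus $\cH$, and then assert that for a relevant root $\xi$ (non-torsion, no $\Lambda$-condition) one has $\varphi_1(\xi)\in V(G)$. That deduction does not follow. Knowing $\varphi(\xi)\in\Xo\cap\cH$ tells you only that $\varphi_1(\xi)\in\psi^{-1}(\X)$; it gives no control on the codimension of the component of $\psi^{-1}(\X)$ through $\varphi_1(\xi)$, and $G=\gcd(\psi^\#(L_1),\dots,\psi^\#(L_s))$ may perfectly well be a constant while $\psi^{-1}(\X)$ still meets $\Im(\varphi_1)$ in non-torsion, non-$\Lambda$ points --- which is precisely the situation the theorem must exclude. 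The paper avoids this by \emph{iterating}: Algorithm~\ref{algo} reapplies Theorem~\ref{BZ+} to the pulled-back variety $\psi^{-1}(\X)\subset\Gm^{N-k}$ at every stage (this is legitimate because $\size(\psi)=O(1)$ keeps the degree and height of the defining equations under control, Lemma~\ref{lemma1}), descending one dimension at a time via Lemma~\ref{riduci} until \emph{no} bounded codimension-$1$ subtorus of the current ambient contains $\Im(\varphi_1)$. Only then does the trichotomy of Remark~\ref{comment}, applied in $\Gm^{N-k}$ to $\psi^{-1}(\X)$, force the component $\W$ through $\varphi_1(\xi)$ to have codimension $1$ (the "bounded subtorus" branch is excluded by the termination condition, the degree branch by~\eqref{main-inequa}), whence $G$ vanishes at $\varphi_1(\xi)$. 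Your one-shot construction of $\cH$ has no mechanism to exclude the possibility that a further descent is still available inside $\Gm^{N-k}$, so the key conclusion is unsupported.

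The second problem is the multiplicity-matching step, which you yourself flag as an unresolved "hope". In the paper this is not a separate transversality argument: once every component of $\psi^{-1}(\X)$ through the relevant points is known to have codimension $1$, the ideal $(F_1,\dots,F_s)$ is principal, generated by $G$, on a neighborhood of those points, so pulling back along $\varphi_1$ gives $\operatorname{ord}_\xi\gcd(f_1,\dots,f_s)=\operatorname{ord}_\xi g$ and hence the statement about roots of $\gcd(f_1,\dots,f_s)/g$ in~\eqref{item:9}. In your setup the needed local statement is simply false in general, because without the full descent $\psi^{-1}(\X)$ can still have codimension~$\ge 2$ components through $\varphi_1(\xi)$, and then no local analysis of tangent directions of $\cH$ versus the $V(L_i)$'s can rescue the equality of multiplicities. (The remaining ingredients of your proposal --- the translation "coset of positive dimension in $\X$ $\Rightarrow$ $\Lambda$-condition" via grouping of characters, i.e.\ Schmidt's classification, the lattice-reduction bound $\size(\psi)\le B'$, the Cramer bound $\size(\varphi_1)\le B'D$, and the divisibility $g\mid\gcd(f_1,\dots,f_s)$ --- match the paper and are fine.)
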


Similarly as for Theorem \ref{gcd-ALS}, the datum $k$, $\psi$ and
$\varphi_{1}$ can be effectively computed. In the present situation,
this is done by the procedure described in \S~\ref{proof-gcd-ALS+},
and this computation costs $O_{\delta,N,s}(\log (D))$ bit
operations.



\section{Proof of Theorem~\ref{BZ+}}
\label{proof-BZ+}

All irreducible components of $\X$ are defined over a number field of
degree bounded by $C$ by polynomials of degree bounded by $C$ and
height bounded by $C h_{0}$, for a constant $C$ depending only on $N$,
$d_{0}$ and $\delta$. Using this, we reduce without loss of generality
to the case when $\X$ is an irreducible subvariety of codimension at
least 2.

We follow closely the proof of \cite[Theorem
4.1]{BombieriMasserZannier:assta}.
Since we assume that
$\xg\in\Xo\cap \T$, the first reduction of the proof in
\emph{loc. cit.} is 
unnecessary in our present situation. Write 
$$
\T=\{(\zeta_1t^{a_1},\ldots,\zeta_N t^{a_N}) \mid  t\in\Gm\}\subseteq\Gm^N
$$ 
with $a_1,\ldots,a_N\in\Z$ coprime and $\zeta_1,\ldots,\zeta_N\in
\upmu_{\infty}$. Thus $\deg (\T)=\vert\ag\vert$. As in
\emph{loc. cit.} we construct, using  geometry of numbers, a
$2$-dimensional torsion coset $T_2$ containing $\T$ and such that
\begin{equation}
\label{siegel}
\deg (T_2)\leq B_1\vert\ag\vert^{\frac{N-2}{N-1}}
\end{equation}
for a constant $B_1$ depending only on $N$.
The proof goes on by distinguishing two cases. 

Suppose first that the point  $\xg$ is an isolated component of
$\X\cap T_2$. 
Since $\xg\in \X\cap \T$, we can write
$\xg=(\zeta_1\xi^{a_1},\ldots,\zeta_N\xi^{a_N})$ with $\Qb^\times\setminus
\upmu_{\infty}$. 
Let $\K $ be a field of definition of $\X$ and set $\E=\K
(\zeta_1,\ldots,\zeta_N)$, which is a field of definition for both
$\X$ and $\T$. Put $\D=[\E(\xg):\E]$. 
Using  B\'ezout theorem and~(\ref{siegel}), we deduce that this degree
satisfies the bound
\begin{equation}
\label{bezout}
\D\leq \deg(\X\cap T_2)\leq B_1\vert\ag\vert^{\frac{N-2}{N-1}}\deg(\X).
\end{equation}
Moreover, since $a_1,\ldots,a_N$ are coprime, $[\E(\xi):\E]=\D$. 

Let $0<\varepsilon<1$. We have that $\E(\xi)$ is an extension of
degree $\leq [\K :\Q] \D$ of the cyclotomic extension
$\Q(\zeta_1,\ldots,\zeta_N)$.  By the relative Dobrowolski lower bound
of~\cite{AmorosoZannier:rDlbae}, the height of $\xi$ is bounded from
below by
\begin{equation}
\label{dobro}
\h(\xi)\geq B_2\D^{-1-\varepsilon},
\end{equation}
where $B_2$ is {an effective} constant that depends only on $\varepsilon$ and $[\K:\Q]$.

By \cite[Appendix, Theorem 1]{Schinzel:psrr}, since the point $\xg$
lies in $\Xo\cap
T$, its height is bounded above by a constant depending
only on $\X$. Indeed, a close inspection of the
proof of this  result shows that
\begin{equation}
  \label{eq:2}
\h(\xg)\leq B_3\cdot(1+h_0).   
\end{equation}
for {an effectively computable} $B_3$ that depends only on $\delta$ and $N$.
Alternatively, this can be obtained by applying Habegger's effective
version of the bounded height theorem \cite[Theorem
11]{Habegger:ehubat} with the choice of parameters $r=2$ and $s=n-1$
with respect to the notation therein,  together
with the arithmetic B\'ezout theorem in \cite[Corollary
2.11]{KPS:sean}. Thus
\begin{equation}
\label{BHT}
\vert\ag\vert \, \h(\xi)\leq \sum_{i=1}^{N}\h(\zeta_i\xi^{a_i})\leq N\h(\xg).
\end{equation}
Combining \eqref{bezout}, \eqref{dobro}, \eqref{eq:2} and~\eqref{BHT}, we get
$$
\deg(\T)= \vert\ag\vert\leq B_2^{-1}\Big(B_1\vert\ag\vert^{\frac{N-2}{N-1}}\deg(\X)\Big)^{1+\varepsilon} NB_3\cdot(1+h_0).
$$
From here, we deduce that
$$
\deg(\T)^{\frac{1-\varepsilon'}{N-1}}\leq B\cdot (1+h_0).
$$
with $\varepsilon'=(N-2)\varepsilon$ and where $B$ is any constant
$\geq B_4=B_2^{-1}(B_1\deg(\X))^{1+\varepsilon}NB_3$, which shows the
result in this case. 

Now suppose that $\xg$ lies in an irreducible component of positive
dimension of $\X\cap T_2$. Denote by $\Y$ this irreducible component,
which is thus a $\X$-anomalous subvariety. Let $\Y_{\max}$ be a a
maximal $\X$-anomalous subvariety containing $\Y$. From the
Bombieri-Masser-Zannier uniform structure theorem \cite[Theorem
1.4]{BombieriMasserZannier:assta}, this subvariety $\Y_{\max}$ is
contained in a coset $\bfg H$ whose degree is bounded in terms of $\X$.
Indeed, by the inequality~(3.4) in \cite{BombieriMasserZannier:assta}, this degree is
bounded by a constant $B_5$
depending only on $\delta$ and $\deg(\X)$. As explained in
  \emph{loc. cit.}, this constant is also effectively computable.

The intersection $T_2\cap \bfg H$ is a union of cosets associated to
the same subtorus. Denote by $K$ the unique coset in this intersection
that contains $\Y$. Its dimension is either $1$ or $2$. The case
$\dim(K)=1$ is not possible since, otherwise, $\Y=K$ is a coset, which
is forbidden by the hypothesis that $\xg\in\Xo$. Hence $\dim (K)=2$,
which means that some irreducible component of $T_2$ lies in $\bfg
H$. Take a torsion point $\bfg_0$ lying in this irreducible
component. Then $\bfg_0\in \bfg H$ and $\bfg H=\bfg_0H$ is a torsion
coset of degree bounded by~$B_5$. We can find a further constant $B_6$
depending only on $\delta$ and $\deg(\X)$ such that there exists a
torsion hypersurface $T'$ with $\bfg_0H\subseteq T'$ and $\deg(T')\leq
B_6$. We then choose $B=\max(B_4,B_6)$, concluding the proof.

\section{Proof of Theorem~\ref{gcd-ALS+}}
\label{proof-gcd-ALS+}

We follow the proof of~\cite[Theorem 4.3]{ALS:oslpe}, replacing the
use of Theorem~\ref{BZ} by Theorem~\ref{BZ+}.  We first need to prove
some auxiliary lemmas.

\begin{lemma}
\label{riduci}
Let $\varphi\colon\Gm\to\Gm^N$ be a homomorphism of size $D$ and
$T\subseteq \Gm^N$ a subtorus of codimension $1$. We can test if
$\Im(\varphi)\subseteq T$ and, if this is the case, we can compute two
homomorphisms $\wt{\psi}\colon\Gm^{N-1}\to\Gm^N$ and
$\wt{\varphi}\colon\Gm\to\Gm^{N-1}$ such that
\begin{enumerate}
\item \label{item:10} $\wt{\psi}$ is injective and $\wt{\psi}\circ\wt{\varphi}=\varphi$;
\item \label{item:11} $\size(\wt{\psi})= O(1) $ and
  $\size(\wt{\varphi})= O(D)$.
\end{enumerate}
This computation can be done with $O(\log (D))$ bit operations. All
the implicit constants depend only on $N$ and $\deg(T)$.
\end{lemma}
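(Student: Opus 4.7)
The plan is to reduce the construction to elementary linear algebra over $\Z$, using that the codimension-$1$ subtorus $T$ is encoded by a single primitive character $\bg=(b_1,\dots,b_N)\in\Z^N$ with $|\bg|=O_{\deg(T)}(1)$. Writing $\varphi(t)=(t^{a_1},\dots,t^{a_N})$ and $\ag=(a_1,\dots,a_N)$, the inclusion $\Im(\varphi)\subseteq T$ is equivalent to $\varphi(t)^{\bg}=t^{\langle\ag,\bg\rangle}\equiv 1$, that is, $\langle\ag,\bg\rangle=0$. This test is a single inner product between an $O(1)$-entry vector and a vector with $O(\log D)$-bit entries, so it costs $O(\log D)$ bit operations.

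Assuming $\langle\ag,\bg\rangle=0$, I parameterize $T$ via a dual basis. First I complete $\bg$ to a $\Z$-basis $\bg=\bg_1,\bg_2,\dots,\bg_N$ of $\Z^N$ with $|\bg_i|=O(1)$; this is achievable by the Hermite normal form procedure applied to $\bg$, a computation whose cost is independent of $D$. Next I compute the dual basis $\bg_1^*,\dots,\bg_N^*\in\Z^N$ characterized by $\langle\bg_i,\bg_j^*\rangle=\delta_{ij}$. Since the matrix with columns $\bg_1,\dots,\bg_N$ is unimodular with $O(1)$ entries, the cofactor formula gives $|\bg_j^*|=O(1)$ as well. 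I then define $\widetilde{\psi}\colon\Gm^{N-1}\to\Gm^N$ by the $N\times(N-1)$ matrix $A=[\bg_2^*\mid\cdots\mid\bg_N^*]$; that is, the $i$-th coordinate of $\widetilde{\psi}(\yg)$ is $\yg^{\rho_i}$, with $\rho_i$ the $i$-th row of $A$. The identity $\bg^T A=0$ forces $\Im(\widetilde{\psi})\subseteq T$, while the induced character map $A^T\colon\Z^N\to\Z^{N-1}$ sends $\bg_j\mapsto e_{j-1}$ for $j\ge 2$ and kills $\bg$, so it is surjective with kernel exactly $\Z\bg$. Hence $\widetilde{\psi}$ is injective with image precisely $T$, and $\size(\widetilde{\psi})=\max_i|\rho_i|=O(1)$.

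To obtain $\widetilde{\varphi}\colon\Gm\to\Gm^{N-1}$ satisfying $\widetilde{\psi}\circ\widetilde{\varphi}=\varphi$, I need $\bfc\in\Z^{N-1}$ with $A\bfc=\ag$. The hypothesis $\langle\ag,\bg\rangle=0$ gives $\ag\in\ker(\bg^T)=\Z\bg_2^*\oplus\cdots\oplus\Z\bg_N^*=\Im(A)$, so the equation has a unique integral solution. Expanding $\ag=\sum_{i\ge 2}c_{i-1}\bg_i^*$ and pairing with $\bg_i$ yields the closed formula $c_{i-1}=\langle\bg_i,\ag\rangle$, each computable by one inner product in $O(\log D)$ bit operations. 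Therefore $\size(\widetilde{\varphi})\le N\max_{i\ge 2}\|\bg_i\|_\infty\cdot D=O(D)$, and setting $\widetilde{\varphi}(t)=(t^{c_1},\dots,t^{c_{N-1}})$ completes the construction.

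There is no real analytic obstacle: the whole argument is bookkeeping with dual lattices. The one point that must be handled with some care is the uniform bound $|\bg_j^*|=O(1)$, which is what keeps $\size(\widetilde{\psi})$ independent of $D$ while making all the other steps scale only logarithmically in $D$. This bound follows because cofactors of a unimodular integer matrix whose entries are themselves $O(1)$ are again $O(1)$, reducing the whole lemma to the observation that once the dimension $N$ and the degree $\deg(T)$ are fixed, the only object that grows with $D$ is the cocharacter $\ag$ of $\varphi$, and it enters linearly in $\widetilde{\varphi}$ alone.
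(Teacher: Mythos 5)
Your proposal is correct and follows essentially the same route as the paper: completing the primitive character $\bg$ of $T$ to a unimodular basis and passing to the dual basis is exactly the automorphism $\tau$ of $\Gm^N$ that the paper uses to move $T$ to $\{x_N=1\}$, after which your $\wt{\psi}$ and $\wt{\varphi}$ coincide with $\tau^{-1}\circ\iota$ and $\pi\circ\tau\circ\varphi$. The only difference is that you carry out explicitly the size and complexity verifications (dual-basis/cofactor bounds, the inner products $\langle\bg_i,\ag\rangle$) that the paper leaves to the reader with a reference to \cite[Lemma 4.1]{ALS:oslpe}.
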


\begin{proof}
  Let $\xg^\bg=1$ be an equation for $T$ and write
  $\varphi(\xg)=(\xg^{a_1},\ldots,\xg^{a_N})$ with $a_{1},\dots,
  a_{N}\in \Z$ coprime. Then $\Im(\varphi)\subseteq T$ if and only if
  $\sum_{j}a_jb_j=0$. Let us assume that this is the case. We choose
  an automorphism $\tau$ of $\Gm^N$ such that $\tau(T)$ is defined by
  the equation $x_N=1$. Let $\iota\colon \Gm^{N-1}\to\Gm^{N}$ be the
  standard inclusion identifying $\Gm^{N-1}$ with the hyperplane of
  equation $x_N=1$, and consider  the
  projection onto the
  first $N-1$ coordinates 
  \begin{displaymath}
 \pi\colon \Gm^{N} \to\Gm^{N-1} \quad , \quad 
     \pi(x_1,\ldots,x_N)=(x_1,\ldots,x_{N-1},1).
  \end{displaymath}
We then set $\wt{\psi}=\tau^{-1}\circ\iota$ and
  $\wt{\varphi}=\pi\circ \tau \circ \varphi$.

  We leave to the reader the verification on the correctness and the
  complexity of this algorithm, see~\cite[Lemma 4.1]{ALS:oslpe} for
  further details.
\end{proof}





We now describe the algorithm underlying Theorem~\ref{gcd-ALS+}.

 \begin{algorithm}  \caption{} \label{algo}
    \begin{algorithmic}[1]
\Require~
a subvariety $\X\subset\Gm^N$ defined over a number field $\K $ and a homomorphism $\varphi\colon\Gm\to\Gm^N$.
\Ensure an integer $k$ with $0\le k\le N-1$ and two homomorphisms $\psi\colon\Gm^{N-k}\to\Gm^N$ and $\varphi_1\colon\Gm\to\Gm^{N-k}$ .
\State \label{ini} Set $k\leftarrow 0$, $\psi\leftarrow{\rm Id}_{\Gm^N}$ and $\varphi_1\leftarrow\varphi$;
\While{$k< N$} \label{alg:39}
\State \label{alg:43} let $B$ the constant in Theorem~\ref{BZ+} for the subvariety ${\psi^{-1}(\X)}\subset\Gm^{N-k}$\par 
\hskip-0.2cm  and the choice $\varepsilon =\frac{1}{2}$;
\State  \label{alg:44} set $\Phi\leftarrow \{\{\xg^{\bg}=1\}\mid \bg\in \Z^{N} \text{ primitive such that } |\bg|\le B\}$;
\While{$\Phi\ne \emptyset$} 
\State \label{toro} choose $\Tp\in \Phi$;
\If{$\Im(\varphi_1)\subseteq \Tp$}
\State \label{comp-morph} compute as in Lemma~\ref{riduci} homomorphisms $\wt{\psi}\colon\Gm^{N-k-1}\to\Gm^{N-k}$\par
\hskip1cm and $\wt{\varphi}\colon\Gm\to\Gm^{N-k-1}$ such that $\varphi_1=\wt{\psi}\circ\wt{\varphi}$; 
\State \label{set} set $\psi\leftarrow \psi\circ\wt{\psi}$,
$\varphi_1\leftarrow \wt{\varphi}$, $k\leftarrow k+1$, $\Phi\leftarrow
\emptyset$;
\Else 
\State set $\Phi\leftarrow \Phi\setminus \{\Tp\}$; 
\EndIf
\EndWhile
\EndWhile \label{alg:40}
    \end{algorithmic}
  \end{algorithm}

\begin{lemma}
\label{lemma1}
Let $\X\subset\Gm^N$ be a subvariety defined over a number field of
degree $\delta$ by polynomials of degree bounded by $d_0$. Let also
$\varphi\colon\Gm\to\Gm^N$ be a homomorphism of size
$D$. Algorithm~\ref{algo} computes an integer $k$ with $0\leq k<N-1$
and two homomorphisms $\psi\colon\Gm^{N-k}\to\Gm^N$ and
$\varphi_1\colon\Gm\to\Gm^{N-k}$ such that
\begin{enumerate}
\item \label{item:12} $\psi$ is injective and $\psi\circ\varphi_1=\varphi$;
\item \label{item:13} $\size(\psi)= O(1)$ and $\size(\varphi_1)= O(D)$.
\end{enumerate}
This computation is done with $O(\log D)$ bit
operations. All the implicit constants in the $O$-notation depend
only on $N$, $d_0$ and $\delta$.
\end{lemma}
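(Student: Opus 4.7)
The plan is to maintain, as an invariant of the outer loop, that $\psi\colon\Gm^{N-k}\to\Gm^N$ is injective and $\psi\circ\varphi_1=\varphi$, and to track the sizes of $\psi$ and $\varphi_1$ across reduction steps.

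First I would verify the invariant by induction on outer-loop iterations. At initialization (step \ref{ini}) it holds trivially. Each successful execution of step \ref{set} replaces $(\psi,\varphi_1)$ by $(\psi\circ\wt\psi,\wt\varphi)$, where Lemma \ref{riduci}(\ref{item:10}) guarantees that $\wt\psi$ is injective and $\wt\psi\circ\wt\varphi=\varphi_1$. Both injectivity of the new $\psi$ and the relation $\psi\circ\varphi_1=\varphi$ persist. For termination, each successful execution of step \ref{set} strictly increments $k$, and the outer loop condition caps the number of such executions by $N$; if at some iteration no $\Tp\in\Phi$ contains $\Im(\varphi_1)$, the inner loop empties $\Phi$ without reducing and the algorithm exits. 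This yields item \ref{item:12}.

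For item \ref{item:13} I would proceed by induction on the number of reductions. Lemma \ref{riduci}(\ref{item:11}) gives $\size(\wt\psi)=O(1)$ and $\size(\wt\varphi)=O(\size(\varphi_1))$, with implicit constants depending only on $N-k$ and on $\deg(\Tp)\le B$. Composing monomial maps multiplies sizes up to a dimension-dependent factor, so after at most $N$ reductions we obtain $\size(\psi)=O(1)$ and $\size(\varphi_1)=O(D)$. For complexity, at each outer iteration $B$ is a constant, hence $|\Phi|=O(1)$; the test $\Im(\varphi_1)\subseteq\Tp$ reduces to checking a linear relation $\sum a_j b_j=0$ with $|a_j|\le\size(\varphi_1)=O(D)$ and $|b_j|\le B=O(1)$, costing $O(\log D)$ bit operations; the computation in Lemma \ref{riduci} adds another $O(\log D)$. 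Since the outer loop runs $O(1)$ times, the total cost is $O(\log D)$.

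The main obstacle is controlling the constant $B$ from Theorem \ref{BZ+} uniformly across all iterations. Since $B$ depends on the subvariety $\psi^{-1}(\X)\subset\Gm^{N-k}$, I would need to check that the defining polynomials of $\psi^{-1}(\X)$ have degree and field of definition bounded in terms of $N$, $d_0$, $\delta$ alone. This follows from $\size(\psi)=O(1)$: pulling back a polynomial of degree $\le d_0$ by a monomial map of bounded size produces a Laurent polynomial of bounded support over the same number field, so $\psi^{-1}(\X)$ is defined by polynomials of degree $O(1)$, and Theorem \ref{BZ+} applied to it yields a constant $B$ depending only on $N$, $d_0$, $\delta$.
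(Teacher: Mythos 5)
Your proposal is correct and follows essentially the same route as the paper: induction on the reduction level $k$, using Lemma~\ref{riduci} for injectivity, the factorization $\wt{\psi}\circ\wt{\varphi}=\varphi_1$ and the per-step size bounds, and using the inductive bound $\size(\psi)=O(1)$ to see that $\psi^{-1}(\X)$ is defined over a field of bounded degree by polynomials of bounded degree, so that the constant $B$ of Theorem~\ref{BZ+} (which is independent of the height) and hence $\deg(\Tp)$ stay $O(1)$ at every iteration. The only differences are cosmetic: you also spell out the $O(\log D)$ complexity count, which the paper leaves to the reader, and your degree argument for $\psi^{-1}(\X)$ (pullback of degree-$d_0$ equations under a bounded-size monomial map) works for general $\X$ rather than invoking linearity as the paper's proof does.
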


\begin{proof}
  We show by induction on $k$ that the homomorphisms $\psi$ and
  $\varphi_1$ constructed by the algorithm at the level $k$ satisfy
  both \eqref{item:12} and \eqref{item:13}.

This is certainly true at the level $k=0$. Indeed at this level
$\psi={\rm Id}_{\Gm^N}$ and $\varphi_1=\varphi$.

Let $k$ be an integer with $1\leq k<N$ and assume that at the level
$k-1$ the homomorphisms $\psi$ and $\varphi_1$ satisfy \eqref{item:12}
and \eqref{item:13}. By Lemma~\ref{riduci}, the homomorphisms
$\wt{\psi}$ and $\wt{\varphi}$ at line~\ref{comp-morph} satisfy
$\wt{\psi}\circ\wt{\varphi}=\varphi_1$. Hence the updated values of
$\psi$ and $\varphi_1$, that is $\psi\circ\wt{\psi}$ and
$\wt{\varphi}$, satisfy
$$
(\psi\circ\wt{\psi})\circ\wt{\varphi}=\psi\circ\varphi_1=\varphi.
$$

Moreover, since $\psi$ and $\wt{\psi}$ are injective, by induction
and by Lemma~\ref{riduci}\eqref{item:10}, $\psi\circ\wt{\psi}$ is also
injective.

Let $B$ be as in line \ref{alg:43} of the algorithm~\ref{algo}, that is, the constant 
in Theorem~\ref{BZ+} for the subvariety $\psi^{-1}(\X)$ and the choice $\varepsilon =\frac{1}{2}$.
Since $\size(\psi)=O(1)$ and $\X$ is linear, $\psi^{-1}(\X)$ is defined over a number field
of degree $O(1)$ by polynomials of degree $O(1)$ and height
$O(h_0)$, with implicit constants depending only on $N$ and
$\delta$. In particular, $B=O(1)$. The same is therefore true for the degree of the subtorus $\Tp$ at
line~\ref{toro}. By Lemma~\ref{riduci}\eqref{item:11}, the
homomorphisms $\wt{\psi}$ and $\wt{\varphi}$ at line~\ref{comp-morph}
have size $O(1)$ and $O(D)$ respectively. Thus $\psi\circ\wt{\psi}$
and $\wt{\varphi}$ have also size $O(1)$ and $O(D)$, respectively.

We left to the reader the verification on the complexity of the algorithm.
\end{proof}

We are now able to conclude the proof of Theorem~\ref{gcd-ALS+}.
Let $\K $ and $f_1$, $\dots$, $f_s$ be as in that theorem. Thus $\K $ is a number field of degree $\delta$ and 
$$
f_i=\gamma_{i,0}+\gamma_{i,1}t^{a_1}+\cdots+\gamma_{i,N}t^{a_N},\qquad i=1,\ldots,s,
$$
are Laurent polynomials, not all zeros, with $a_1,\ldots,a_N$ coprime. Set $D=\vert\ag\vert$ 
and assume $\max_{i,j}h(\gamma_{i,j})\leq h_0$.  We consider the homomorphism
$\varphi\colon\Gm\to\Gm^N$ given by
$\varphi(t)=(t^{a_1},\ldots,t^{a_N})$. Since $a_1,\ldots,a_N$ are
coprime, $\deg(\Im(\varphi))=D$. We let
$$
L_i=\gamma_{i,0}+\gamma_{i,1}x_1+\cdots+\gamma_{i,N}x_n,\qquad i=1,\ldots,s.
$$ 
Thus $f_i=\varphi^\#(L_i)$. We apply Algorithm~\ref{algo} to the
linear subvariety $\X$ defined in $\Gm^N$ by the system of equations
$L_1=\ldots=L_s=0$.

From now on, we denote by $k\in\{0,\ldots,N-1\}$,
$\psi\colon\Gm^{N-k}\to\Gm^N$ and $\varphi_1\colon\Gm\to\Gm^{N-k}$ the
output of Algorithm \ref{algo} applied to this subvariety. Put for
short $F_i=\psi^\#(L_i)$. By Lemma~\ref{lemma1},
$\varphi_1^\#(F_i)=f_i$. Since $f_1,\ldots,f_s$ are not all zeros, the same holds for 
$F_1,\ldots,F_s$. Now set 
\begin{displaymath}
  G=\gcd(F_1,\dots, F_s) \quad \text{ and } \quad
g=\varphi_1^\#(G).
\end{displaymath}
Then $g\vert\gcd(f_1,\ldots,f_s)$, as in Theorem \ref{gcd-ALS+}\eqref{item:9}.

Let $B'$ be a constant depending only on $N$ and $\delta$ such that
\begin{equation}
\label{H}
D^{\frac{1}{2(N-1)}}>B'\cdot (1+h_0),
\end{equation}
as in the statement of Theorem \ref{gcd-ALS+}, to be fixed later on.

Let $\Omega$ be the set of points $\xi\in \C^{\times}$ which are
either a root of unity or a common root of the system of polynomials
$\gamma_{i,0}+\sum_{j\in\Lambda}\gamma_{i,j}t^{a_j}$, $i=1,\ldots,s$,
for a nonempty proper subset
$\Lambda\subset\{1,\ldots,N\}$. 

Let $\xi\not\in\Omega$ be a common zero of $f_1,\ldots,f_s$ and $\W$ a
component of $\psi^{-1}(\X)$ such that $\varphi_1(\xi)\in \W$.

We first remark that $\varphi_1(\xi)\in \W^{\rm o}$. If it is not, the
point $\yg=\varphi_1(\xi)$ is in a coset $gH\subseteq
\W\subseteq\psi^{-1}(\X)$ of positive dimension. By
Lemma~\ref{lemma1}\eqref{item:13}, the point
$\xg=\varphi(\xi)=\psi(\yg)$ is contained in the coset
$\psi(gH)\subseteq\X$, which is also of positive dimension since
$\psi$ is injective. 

The cosets included in a linear variety $\X$ have been explicitly
classified in~\cite[page 161]{Schmidt:hps}. By this result, there
exists a nonempty proper subset $\Lambda\subset\{1,\ldots,N\}$ such
that $\gamma_{i,0}+\sum_{j\in\Lambda}\gamma_{i,j}x_j=0$,
$i=1,\ldots,s$. Hence $\xi$ is a common root of
$\gamma_{i,0}+\sum_{j\in\Lambda}\gamma_{i,j}t^{a_j}$, $i=1,\ldots,s$,
but this is not possible because $\xi\notin \Omega$.

Thus $\xi$ is not a root of unity and $\varphi_1(\xi)\in \W^{\rm
o}$. We apply Theorem~\ref{BZ+} in the simplified form of
Remark~\ref{comment}, choosing $N\leftarrow N-k$, $\X\leftarrow \psi^{-1}(\X)$,
$\varepsilon\leftarrow 1/2$ and $\varphi\leftarrow\varphi_1$.
Let $B$ be as in line \ref{alg:43} of the algorithm~\ref{algo}. As already remarked in 
the proof of Lemma~\ref{lemma1}, $\psi^{-1}(\X)$ is defined over a number field
of degree $O(1)$ by polynomials of degree $O(1)$ and height
$O(h_0)$, with implicit constants depending only on $N$ and
$\delta$. In particular, $B=O(1)$. 
By the quoted Remark~\ref{comment}, one of the following assertions holds:
\begin{enumerate}
\item \label{item:4} there exists a subtorus $\Tp$ of codimension $1$
and degree bounded by $B$ such that $\Im(\varphi_1)\subseteq \Tp$;
\item \label{item:5} $\deg(\Im(\varphi_1))^{\frac{1}{2(N-k-1)}}=O(1+h_0)$;
\item \label{item:6} $\W$ has codimension $1$.
\end{enumerate}
By construction, \eqref{item:4} is not possible because $\Tp\in
\Phi$. Let us assume that  \eqref{item:5} holds. By Lemma~\ref{lemma1},
$D=\deg(\Im(\varphi))=\deg(\Im(\psi\circ\varphi_1))=O(\deg(\Im(\varphi_1)))$. Thus
$$
D^{\frac{1}{2(N-1)}}\leq D^{\frac{1}{2(N-k-1)}}=O(\deg(\Im(\varphi_1))^{\frac{1}{2(N-k-1)}})
=O(1+h_0).
$$
Choosing the constant $B'$ sufficiently large, this
contradicts the inequality~\eqref{H}. Thus \eqref{item:6} must hold and $\W$
has codimension $1$.

This discussion implies that the ideal
$(F_1,\ldots,F_s)\subset\K[y_{1}^{\pm1},\dots, y_{N-k}^{\pm1}]$
becomes principal when restricted to a suitable neighborhood
$U\subset\Gm^{N-k}$ of
$\psi^{-1}(\X)\setminus\varphi_1(\Omega)$. Hence,
$(F_1,\ldots,F_s)=(G)$ for some Laurent polynomial $G$ on that
neighborhood. We deduce that $\varphi_1^{-1}(U)$ is a neighborhood of
the set of common zeros $\xi\not\in\Omega$ of $f_1,\ldots,f_s$ and
$(f_1,\ldots,f_s)= (g)$ on $\varphi_1^{-1}(U)$. This completes the
proof of the theorem.

\begin{remark} \label{rem:1} For the study of multiple roots of sparse
  polynomials and, in particular, to prove Theorem \ref{gcd-ALS+}, it
  is not enough to dispose of a version of Theorem~\ref{BZ} with an
  explicit dependence of its constant $B$ on the height of the input
  polynomials. We really need the dichotomy that appears in
  Theorem~\ref{BZ+}, with a bound for the degree of $\Tp$ independent
  of the height of the equations defining $\X$, whenever the degree of
  the torsion curve $T$ is large enough.

  In any case, it is possible to adapt the proof of \cite[Theorem
  4.1]{BombieriMasserZannier:assta} to prove
  such an effective version of Theorem~\ref{BZ}.
\end{remark}

\section{Proof of Theorem~\ref{multiple}}
\label{proof-multiple}

Let $N\ge 1$ and $\gammag=(\gamma_0,\gamma_1,\ldots,\gamma_N)\in
\Qb^{N+1}$. Consider the number field $\K=\Q(\gammag)$ and
the affine polynomial
\begin{displaymath}
  L=\gamma_0+\gamma_1{x_1}+\cdots+\gamma_N{x_N}\in \K[x_{1},\dots,x_{N}].
\end{displaymath}
Set $\delta=[\K:\Q]$ and $h_{0}=\max_{j}\h(\gamma_{j})$. 

Let $\ag=(a_1,\ldots,a_N)\in\Z^N$ such that the univariate Laurent polynomial
$$
f=L(t^{a_1},\ldots,t^{a_N})=\gamma_0+\gamma_1t^{a_1}+\cdots+\gamma_Nt^{a_N}
$$ 
is nonzero and has a multiple root at a point $\xi\in \Qb\setminus \upmu_{\infty}$. Set $a_0=0$ and assume for the moment that
\begin{equation}
\label{subsums}
\xi \hbox{ is {\it not} a multiple root of }
\sum_{j\in\Lambda}\gamma_jt^{a_j} 
\hbox{ for every nonempty } \Lambda\subsetneq\{0,\ldots,N\}.
\end{equation}

We remark that $(a_1,\ldots,a_N)\neq(0,\ldots,0)$, since otherwise $f$ is a nonzero constant and 
cannot vanish at $\xi$. Set $d=\gcd(a_1,\ldots,a_N)$ and put $a'_j=a_j/d$, $j=1,\ldots,N$. We
apply Theorem~\ref{gcd-ALS+} to the polynomials
\begin{displaymath}
f_1=\gamma_0+\gamma_1t^{a'_1}+\cdots+\gamma_Nt^{a'_N} \quad \text{ and
} \quad 
f_2=tf'_1=\gamma_1a'_1t^{a'_1}+\cdots+\gamma_Na'_Nt^{a'_N},
\end{displaymath}
and the homomorphism $\varphi\colon
\Gm\to \Gm^{N}$ defined by $ \varphi(t)=(t^{a'_1},\ldots,t^{a'_N}) $.

Thus $f=f_1(t^d)$ and, in the notation of Theorem~\ref{gcd-ALS+}, $D=\vert\ag'\vert$, 
\begin{displaymath}
L_1=\gamma_0+\gamma_1x_1+\cdots+\gamma_Nx_N \quad
\text{ and } \quad 
L_2=\gamma_1a'_1x_1+\cdots+\gamma_Na'_Nx_N.
\end{displaymath}
We have
\begin{displaymath}
  \h(f_{i}) \le h_{0}+\log(D).
\end{displaymath}
Let $B'=B'(N,\delta)$ be the constant which appears in
Theorem~\ref{gcd-ALS+}. If the inequality~(\ref{main-inequa}) is not
satisfied, we have
$$
D^{\frac{1}{2(N-1)}}\leq B'\cdot (1+h_0+\log (D)),
$$
which shows that $D\leq C_1$ for some positive constant
$C_1=C_1(N,\delta,h_0)$. In this case, we choose $k=N-1$,
$\bg_j=a'_j$, $j=1,\ldots,N$, $\theta_1=d$ and $C\geq C_1$. Assertions
\eqref{item:1}, \eqref{item:2} and \eqref{item:3} of
Theorem~\ref{multiple} are then clearly verified.  

We now assume that the inequality~\eqref{main-inequa} is
satisfied. Theorem~\ref{gcd-ALS+} then gives a {nonnegative} integer
$k\le N$ and two morphisms $\psi\colon\Gm^{N-k}\to\Gm^N$ and
$\varphi_1\colon\Gm\to\Gm^{N-k}$ satisfying the conditions
\eqref{item:7}, \eqref{item:8} and \eqref{item:9} of that
theorem. Write $\psi(\yg)=(\yg^{\bg_1},\ldots,\yg^{\bg_N})$ and
$\varphi_1(t)=(t^{\theta'_1},\ldots,t^{\theta'_{N-k}})$ with
$\bg_1,\ldots,\bg_N\in\Z^{N-k}$ of size $\leq B'$ and
$\theta'_1,\ldots,\theta'_{N-k}\in\Z$ of size $\leq B' D$.  By
\eqref{item:7}, the $N\times (N-k)$ matrix $\Bg=(b_{j,i})$ {is
  primitive} and $\ag'=\Bg\cdot\thetag'$. We set
\begin{equation*}
{F_1}=\psi^\#(L_1)=\gamma_0+\gamma_1\yg^{\bg_1}+\cdots+\gamma_N\yg^{\bg_N},
\quad {F_2}=\psi^\#(L_2)=\gamma_1a'_1\yg^{\bg_1}+\cdots+\gamma_Na'_N\yg^{\bg_N},
\end{equation*}
and we consider the differential operator
$$
\Delta=\theta'_1y_1\frac{\partial}{\partial y_1}+\cdots+\theta'_{N-k}y_{N-k}\frac{\partial}{\partial y_{N-k}}.
$$
Let $\bg\in\Z^{N-k}$. The monomial $\yg^{\bg}$ is an
eigenvector of $\Delta$ with eigenvalue the scalar product
$\langle\bg , \thetag'\rangle$. Hence
$$
\Delta {F_1}=\sum_{i=1}^N\gamma_i \langle\bg_{i},\thetag'\rangle \yg^{\bg_i}={F_2}.
$$

Set $G=\gcd(F_{1},F_{2})$. By hypothesis,  $\xi^d$ is a common non-cyclotomic
root of $f_1$ and $f_2$ and, by the additional assumption~(\ref{subsums}), $\xi^d$ is not a multiple root
of $\sum_{j\in\Lambda}\gamma_jt^{a'_j}$ for any nonempty proper subset 
$\Lambda$ of $\{0,\ldots,N\}$. By
Theorem~\ref{gcd-ALS+}\eqref{item:9},  there exists an irreducible factor $P$ of $G$
such that $\pi=\varphi_1^\#(P) \in \K[t]$ vanishes at $\xi^d$.

We want to show that $P$ is a multiple factor of $G$. Since
$P\mid{F_1}$ and $P\mid F_{2}\Delta{F_1}$, by standard arguments
either $P^2\mid {F_1}$ as we want, or $\Delta P=\lambda P$ for a
constant $\lambda$. Let us assume  that this last
assertion holds. Write 
\begin{displaymath}
 P=\sum_{\bg\in \Z^{N-k}} c_{\bg}\yg^{\bg} 
\end{displaymath}
and set $\supp(P)=\{ \bg\in\Z^{N-k} \mid c_{\bg}\neq0\}$ for the
support of $P$.  
The
differential equation $\Delta P=\lambda P$ then says that the scalar
product $\langle\bg, \thetag'\rangle$ is constant over $\supp(P)$,
which in turns implies that $\pi$ is a monomial. 
But
then $\pi$ cannot vanish
 at
$\xi^{d}$ because the latter is nonzero, which is a contradiction. 

Thus $P$ is a multiple factor of $F_1$. Set $\theta_i=d\theta'_i$, so
that $P(t^{\theta_1},\ldots,t^{\theta_N})$ is a multiple factor of $f$
which vanishes at the point $\xi$, as required. {Remark that $k\geq 1$. Indeed 
the matrix $\Bg$ is primitive and the polynomial $L_1$ does not have multiple factors, since it 
is linear.} Theorem~\ref{multiple} thus follows, under the additional 
hypothesis~(\ref{subsums}), by choosing $C=\max\{C_1,B'\}$.\\

We now  explain how to remove the extra
assumption~(\ref{subsums}). Let as assume that~(\ref{subsums}) does
not hold.  We decompose $\{0,\ldots,N\}$ as a maximal union of
$u\geq2$ nonempty disjoint subsets $\Lambda_1,\ldots,\Lambda_u$ in
such a way that $\xi$ is a multiple root of
$\sum_{j\in\Lambda_i}\gamma_j t^{a_j}$ for $i=1,\ldots,u$. To simplify
the notation, we assume $u=2$ and $\Lambda_1=\{0,\ldots,M\}$ with
$0\leq M\le N-1$. Thus $\xi$ is a multiple root of both
\begin{equation}
\label{spezza}
\gamma_0+\sum_{j=1}^M\gamma_jt^{a_j}\quad{\rm and}\quad \sum_{j=M+1}^{N}\gamma_jt^{a_j}.
\end{equation}
Moreover, $\xi$ is {\sl not} a multiple root of
$\sum_{j\in\Delta}\gamma_jt^{a_j}$ for any nonempty $\Delta$ which is
a proper subset of $\{0,\ldots,M\}$ or of $\{M+1,\ldots,N\}$.

We write
\begin{multline*}
  \gamma_0+\gamma_1t^{a_1}+\cdots+\gamma_Nt^{a_N}
  =(\gamma_0+\gamma_1t^{a_1}+\cdots+\gamma_Mt^{a_M})\\
+t^{a_{M+1}}(\gamma_{M+1}+\gamma_{M+2}t^{a_{M+2}-a_{M+1}}+\cdots+\gamma_Nt^{a_N-a_{M+1}}).
\end{multline*}
We remark that $a_1,\ldots,a_M$, $a_{M+2}-a_{M+1},\ldots,a_N-a_{M+1}$ are not all zeros, since otherwise 
the polynomials~\eqref{spezza} are monomials vanishing at $\xi$, and hence they are both zero, which in turns 
implies that $f$ is also zero, contrary to the assumption of Theorem~\ref{multiple}.

Set $d=\gcd(a_1,\ldots,a_M,a_{M+2}-a_{M+1},\ldots,a_N-a_{M+1})$ and put 
$$
a'_j=
\begin{cases}
a_j/d, &\hbox{ for } j=1,\ldots,M,\\
(a_j-a_{M+1})/d, &\hbox{ for } j=M+3,\ldots,N.
\end{cases}
$$
Thus $a'_1,\ldots,a'_M,a'_{M+2},\ldots,a'_{N}$ are coprime, pairwise distinct, nonzero integers. We apply
Theorem~\ref{gcd-ALS+} to the homomorphism $\varphi\colon \Gm\to
\Gm^{N-1}$ defined by 
$$
\varphi(t)=(t^{a'_1},\ldots,t^{a'_M},t^{a'_{M+2}},\ldots,t^{a'_N})\;,
$$
and for the four polynomials
$$
f_1=\gamma_0+\sum_{j=1}^M\gamma_jt^{a'_j}, 
\quad f_2=\gamma_{M+1}+\sum_{j=M+2}^{N}\gamma_jt^{a'_j}, \quad
f_3=tf'_1, \quad  f_4=tf'_2.
$$
Thus $f=f_1(t^d)+t^{a_{M+1}}f_2(t^d)$ and $D=\vert\ag'\vert$. 

We argue as in the first part of the proof. We remark that 
$\h(f_{i}) \le h_{0}+\log(2D)$. Let $B'=B'(N,\delta)$ be the
constant that appears in Theorem~\ref{gcd-ALS+}.

If the inequality~(\ref{main-inequa}) is not satisfied, then $D\leq C_1=C_1(N,\delta,h_0)$. In this case, we choose $k=N-2$, $\theta_1=d$, $\theta_2=a_{M+1}$ and 
$$
\bg_j=
\begin{cases}
(a'_j,0) &\hbox{ for } j=1,\ldots,M,\\
(0,1) &\hbox{ for } j=M+1,\\
(a'_j,1) &\hbox{ for } j=M+2,\ldots,N.
\end{cases}
$$
Thus, in the notation of Theorem~\ref{multiple}\eqref{item:3},
$$
F=f_1(y_1)+y_2^{a_{M+1}}f_2(y_1)\in\Qb[y_{1}^{\pm1}, y_{2}^{\pm1}].
$$
Since $\xi$ is a multiple root of both $f_1(t^d)$ and $f_2(t^d)$, the
polynomials $f_1(y_1)$ and $f_2(y_1)$ have a common multiple factor,
say $P(y_1)$, which vanishes at $\xi^d$. Thus $P(y_1)$ is a multiple
factor of $F$ and $P(t^d)$ vanishes at $\xi$, as required.

It remains to consider the case when the
inequality~\eqref{main-inequa} is satisfied. Theorem~\ref{gcd-ALS+}
then gives a {nonnegative} integer $k\le N-1$, vectors
$\bg'_1,\ldots,\bg'_M,\bg'_{M+2},\ldots,\bg'_N\in\Z^{N-1-k}$ of size
$\leq B'$ and $\theta'_1,\ldots,\theta'_{N-1-k}\in\Z$ of size $\leq B'
D$ such that the $(N-1)\times (N-1-k)$ matrix $(b'_{j,i})_{j,i}$ has maximal
rank $N-1-k$ and $a'_j=\sum_{i=1}^{N-1-k}b'_{j,i}\theta'_i$ for
$j=1,\ldots,M$ and $j=M+2,\ldots,N$.  We set
$\yg=(y_1,\ldots,y_{N-1})$ and
\begin{align*}
F_1&=\gamma_0+\sum_{j=1}^M\gamma_j\yg^{\bg_j},\quad
&{F_2}&=\gamma_{M+1}+\sum_{j=M+2}^{N}\gamma_{j}\yg^{\bg_j},\\
{F_3}&=\sum_{j=1}^M\gamma_ja'_j\yg^{\bg_j},\quad
&{F_4}&=\sum_{j=M+2}^{N}\gamma_{j}a'_j\yg^{\bg_j},
\end{align*}
and  consider the differential operator
$$
\Delta=\theta'_1y_1\frac{\partial}{\partial y_1}+\cdots+\theta'_{N-1k}y_{N-1-k}\frac{\partial}{\partial y_{N-1-k}}.
$$
As in the first part of the proof, we have that $\Delta {F_1}=F_3$ and
$\Delta {F_2}=F_4$.

Set $G=\gcd(F_{1},F_{2},F_3,F_4)$ and write $f_i=\sum_{\alpha\in S}
f_{i,\alpha}t^{\alpha}$, $i=1,\ldots,4$, with
$$
S=\bigcup_{i=1}^4\supp(f_i)=\{0,a'_1,\ldots,a'_M,a'_{M+2},\ldots,a'_N\}.
$$ 
By hypothesis, $\xi^d$ is a common non-cyclotomic root of $f_1$,
$f_2$, $f_3$ and $f_4$.  We want to deduce from
Theorem~\ref{gcd-ALS+}\eqref{item:9} that $\varphi_1^\#(G) $ vanishes
at $\xi^d$. This certainly happens unless there exists a nonempty
proper subset $\Gamma$ of $S$ such that $\xi^d$ is a common root of
$\sum_{\alpha\in\Gamma}f_{i,\alpha}t^{\alpha}$, $i=1,\ldots,4$.

Assume by contradiction that this is the case. Then $\xi^d$ is a
multiple root of $\sum_{\alpha\in\Gamma}f_{i,\alpha}t^{\alpha}$,
$i=1$,~$2$.  We recall that
$$
\supp(f_1)=\{0,a'_1,\ldots,a'_M\},\qquad\supp(f_2)=\{0,a'_{M+2},\ldots,a'_{N}\}.
$$
Since $\xi$ is not a multiple root of
$\sum_{j\in\Delta}\gamma_jt^{a_j}$ for any nonempty $\Delta$ which is
a proper subset of $\{0,\ldots,M\}$ or of $\{M+1,\ldots,N\}$, we have
$$
\Gamma\cap\supp(f_1)=\emptyset\qquad{\rm or}\qquad\Gamma\cap\supp(f_1)=\supp(f_1)
$$
and
$$
\Gamma\cap\supp(f_2)=\emptyset\qquad{\rm or}\qquad\Gamma\cap\supp(f_2)=\supp(f_2).
$$
Since $\supp(f_1)\cap\supp(f_2)\neq\emptyset$, we deduce that
$\Gamma=\supp(f_1)\cup\supp(f_2)$, which contradict the previous
assumption.  Thus, by Theorem~\ref{gcd-ALS+}\eqref{item:9},
$\varphi_1^\#(G) $ vanishes at $\xi^d$.

Let $P$ be an irreducible factor of $G$ such that
$\pi=\varphi_1^\#(P) \in \K[t]$ vanishes at $\xi^d$. As in the first
part of the proof, $P$ is a multiple factor of both $F_1$ and $F_2$
and thus of the polynomial
$$
F=\gamma_0+\gamma_1\wt\yg^{\bg_1}+\cdots+\gamma_N\wt\yg^{\bg_N}=F_1(y_1,\ldots,y_{N-1})+y_N^{a_{M+1}}F_2(y_1,\ldots,y_{N-1})
$$
with $\wt \yg=(y_1,\ldots,y_N)$. Set $\theta_i=d\theta'_i$ for
$i=1,\ldots,N-1-k$, $\theta_{N-k}=a_{M+1}$ and
$$
\bg_j=
\begin{cases}
(b'_{j,1},\ldots,b'_{j,N-1-k},0) &\hbox{ for } j=1,\ldots,M,\\
(0,\ldots,0,1) &\hbox{ for } j=M+1,\\
(b'_{j,1},\ldots,b'_{j,N-1-k},1) &\hbox{ for } j=M+2,\ldots,N.
\end{cases}
$$
Then the $N\times (N-k)$ matrix $\Bg=(b_{j,i})_{j,i}$ has maximal rank and
$\ag=\Bg\cdot\thetag$, so that
$P(t^{\theta_1},\ldots,t^{\theta_{N-1}})$ is a multiple factor of $f$
which vanishes at the point $\xi$.  Theorem~\ref{multiple} then follows by
choosing $C=\max\{C_1,B'\}$.

\section{On a conjecture of Bolognesi and Pirola} \label{proof-pirola}

Let $\varphi\colon\Gm\rightarrow\Gm^N$ be a homomorphism given by
$\varphi(t)=(t^{a_1},\ldots,t^{a_N})$ for a sequence of integers
$a_1,\ldots,a_N$ such that  $0<a_1<\cdots<a_N$, and consider the curve
$\Cu=\Im(\varphi)$. It is easy to verify  that the linear
subspace $X\subset \C^{N-1}$ defined by the condition
$$
{\rank}\begin{pmatrix}
a_1&a_1^2&\cdots&a_1^{N-2}&x_1-1\\[0.1cm]
a_2&a_2^2&\cdots&a_2^{N-2}&x_2-1\\[0.1cm]
\vdots&\vdots&\cdots&\vdots&\vdots\\[0.1cm]
a_N&a_N^2&\cdots&a_N^{N-2}&x_N-1
\end{pmatrix}<N-1
$$
has codimension $2$, and that the restriction of its defining equations to
$(t^{a_1},\ldots,t^{a_N})$ vanish to order $N-1$ at $t=1$.  Thus, $X$
is the osculating $(N-2)$-linear dimensional space of $\Cu$ at the point
$(1,\ldots,1)\in\Gm^N$.

It is convenient to homogenize by letting $a_0=0$ and considering the
$(N+1)\times N$ matrix given by 
$$
A(\ag,(x_0:\dots:x_N))
=\begin{pmatrix}
1&a_0&a_0^2&\cdots&a_0^{N-2}&x_0\\[0.1cm]
1&a_1&a_1^2&\cdots&a_1^{N-2}&x_1\\[0.1cm]
\vdots&\vdots&\vdots&\cdots&\vdots&\vdots\\[0.1cm]
1&a_N&a_N^2&\cdots&a_N^{N-2}&x_N
\end{pmatrix}.
$$
Then we identify $X$ with the linear subspace of $\P^N$ defined by
condition
\begin{displaymath}
 \rank (A(\ag,(x_0:\dots:x_N)))<N. 
\end{displaymath}

For simplicity, we assume that $a_1,\ldots,a_N$ are coprime. Then $L$
intersects $\Cu$ in a second point different from the osculating one
if and only if there exists $\xi\neq1$ such that
$\rank(A(\ag,(1:\xi^{a_1}:\ldots:\xi^{a_N})))<N$. In~\cite{BolognesiPirola:osde},
Bolognesi and Pirola conjecture that this can never happen. It easily
seen that, to prove their conjecture, we may assume that $\xi$ is not
torsion.

In the case $N=2$ the conjecture is trivial. Bolognesi and Pirola
proved the conjecture for $N=3$. In~\cite{CorvajaZannier:rm}, Corvaja
and Zannier proved a weak form of the conjecture for $N=4$, namely
that the set of exceptional pairs $(\ag,\xi)$ such that the matrix
$A(\ag,(1:\xi^{a_1}:\ldots:\xi^{a_N}))$ has rank $<N$ is finite.

As a second application of Theorem~\ref{gcd-ALS+}, we prove the
following result.

\begin{theorem}
\label{pirola}
There is a constant $C$ depending only on $N$ such that the following
holds. 

Let $a_1,\ldots,a_N$ be integers such that $0=a_0<a_1 < a_2 < \cdots <
a_N=:D$ and $\xi\in \Qb^{\times}\setminus \upmu_{\infty}$.  If the matrix
 $A(\ag,(1:\xi^{a_1}:\ldots:\xi^{a_N}))$ has rank
$<N$, then there exist $1 \le k\le N-1$ and vectors
$\bg_1,\ldots,\bg_N,\thetag\in\Z^{N-k}$ such that
\begin{enumerate}
\item \label{item:14} $\vert\bg_{i}\vert\leq C$, $i=1,\dots, N$, and
  $\vert\thetag\vert\le C D$;
\item \label{item:15} the matrix
  $\Bg=(b_{i,j})_{i,j}\in \Z^{N\times(N-k)}$ {is primitive} and $\ag=\Bg\cdot\thetag$;
\item \label{item:16} the subvariety of $\Gm^{N-k}$ defined by 
$$
V=\{\yg\in\Gm^{N-k} \mid \rank (A(\ag,(1:\yg^{\bg_1}:\ldots:\yg^{\bg_N})))<N\}
$$
has a component of codimension $1$ containing the point
$(\xi^{\theta_1},\ldots,\xi^{\theta_{N-k}})$.
\end{enumerate}
\end{theorem}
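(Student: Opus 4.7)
The strategy mirrors that of Theorem~\ref{multiple}: we encode the rank condition as a system of linear forms and feed them into Theorem~\ref{gcd-ALS+}. For $i=0,1,\ldots,N$, let $M_i$ be the $N\times N$ minor of $A(\ag,(x_0:\dots:x_N))$ obtained by deleting the $i$-th row. Since only the last column of $A$ involves the $x_j$'s, expanding along it yields
\begin{displaymath}
M_i=\sum_{j\neq i}(-1)^{i+j}V_{i,j}(\ag)\,x_j,
\end{displaymath}
where each $V_{i,j}(\ag)$ is a sub-Vandermonde determinant in the $a_k$, hence of height $O(\log D)$. Setting $x_0=1$ produces linear forms $L_0,\ldots,L_N$ in $x_1,\ldots,x_N$ of height $h_0=O(\log D)$, and the rank hypothesis becomes the statement that $\xi\in\Qb^\times\setminus\upmu_\infty$ is a common zero of the sparse polynomials $f_i=L_i(t^{a_1},\ldots,t^{a_N})$, $i=0,\ldots,N$. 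After dividing out $\gcd(a_1,\ldots,a_N)$ we may assume the $a_j$ coprime.

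We apply Theorem~\ref{gcd-ALS+} to the family $(f_0,\ldots,f_N)$. If the inequality~\eqref{main-inequa} fails, the bound $h_0=O(\log D)$ forces $D\le C_1(N)$; in this case choose $k=N-1$, $\bg_j=a_j$ and $\theta_1=1$. The bounds in \eqref{item:14} are immediate, the column matrix $\Bg=(a_1,\ldots,a_N)^\top$ is primitive, and in $\Gm^1$ the variety $V$ is a proper closed subset containing $\xi$, hence finite, so $\{\xi\}$ is a component of codimension~$1$. Otherwise Theorem~\ref{gcd-ALS+} produces $k<N$ together with $\psi\colon\Gm^{N-k}\to\Gm^N$ and $\varphi_1\colon\Gm\to\Gm^{N-k}$ such that $\psi\circ\varphi_1=\varphi$, $\size(\psi)\leq B'$ and $\size(\varphi_1)\leq B'D$. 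Taking $\bg_i$ to be the exponent vectors of $\psi$ and $\thetag$ those of $\varphi_1$ yields \eqref{item:14} and \eqref{item:15}.

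For \eqref{item:16} set $F_i=\psi^\#(L_i)$, $G=\gcd(F_0,\ldots,F_N)$ and $g=\varphi_1^\#(G)$. By construction $V=V(F_0,\ldots,F_N)\supseteq V(G)$, and the non-vanishing of the Vandermonde coefficients $V_{i,j}(\ag)$ prevents $V$ from being all of $\Gm^{N-k}$. Consequently, if $g(\xi)=0$ then $G$ is non-constant and vanishes at $\varphi_1(\xi)=(\xi^{\theta_1},\ldots,\xi^{\theta_{N-k}})$, so the hypersurface $V(G)\subseteq V$ is contained in (and thus cuts out) a component of $V$ of codimension~$1$ through this point, as required.

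The main obstacle is the alternative left open by Theorem~\ref{gcd-ALS+}\eqref{item:9}: there may exist a nonempty proper subset $\Lambda\subset\{1,\ldots,N\}$ with $\gamma_{i,0}+\sum_{j\in\Lambda}\gamma_{i,j}\xi^{a_j}=0$ for every $i=0,\ldots,N$, which places $\xi$ inside a positive-dimensional coset of the linear variety $V(L_0,\ldots,L_N)$ and prevents us from asserting $g(\xi)=0$. As in the final part of the proof of Theorem~\ref{multiple}, the fix will be to decompose $\{0,\ldots,N\}$ into maximal groups $\Lambda_1,\ldots,\Lambda_u$ on which the partial sums simultaneously vanish at $\xi$, apply Theorem~\ref{gcd-ALS+} separately to each piece, and then reassemble the local exponent data into a single primitive matrix $\Bg$ and vector $\thetag$ compatible with \eqref{item:14}--\eqref{item:16}. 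The delicate step is arranging this gluing so that $\Bg$ is primitive and the combined local hypersurfaces still produce a codimension-one component of $V$ through $\varphi_1(\xi)$.
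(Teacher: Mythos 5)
Your main line is the paper's own: encode the rank-drop locus by linear forms in $x_1,\ldots,x_N$ with coefficients that are Vandermonde minors of height $O_N(\log D)$, apply Theorem~\ref{gcd-ALS+} to the resulting sparse system, treat the failure of \eqref{main-inequa} as the bounded case $D\le C_1(N)$ with $k=N-1$, and in the main case obtain the codimension-one component of $V$ through $(\xi^{\theta_1},\ldots,\xi^{\theta_{N-k}})$ from the vanishing of $\varphi_1^\#(G)$ at $\xi$. (That you feed in all $N+1$ maximal minors, $s=N+1$, while the paper uses two defining equations of the codimension-2 space $X$, $s=2$, is immaterial since Theorem~\ref{gcd-ALS+} allows any $s$.)

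However, there is a genuine gap at exactly the point you flag and then leave open: the alternative in Theorem~\ref{gcd-ALS+}\eqref{item:9}, i.e.\ the case where $\xi$ kills some proper subfamily of the terms (the failure of the analogue of \eqref{subsums2}), is not handled. You announce that one should decompose $\{0,\ldots,N\}$ into maximal blocks and ``reassemble'' the local data, and you yourself call the gluing delicate; but that gluing is precisely the content of the argument, and it is carried out explicitly in the last part of the proof of Theorem~\ref{multiple}: one applies Theorem~\ref{gcd-ALS+} to the shifted exponents $a_j-a_{M+1}$, adjoins one extra coordinate, takes $\Bg$ of block shape with rows $(b'_{j,\cdot},0)$, $(0,\ldots,0,1)$, $(b'_{j,\cdot},1)$ and $\theta_{N-k}=a_{M+1}$, and rules out a bad subset $\Gamma$ of the support by showing it would have to be the whole support. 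That construction is what simultaneously yields the primitivity of $\Bg$, the bound $|\thetag|\le CD$, and the codimension-one component through the point; without executing it (or verifying that it transfers verbatim to the rank condition), your proof is incomplete. A second, smaller omission: the statement requires $k\ge 1$, whereas Theorem~\ref{gcd-ALS+} only gives $k\ge 0$, and you never exclude $k=0$. The paper disposes of this in one line: if $k=0$ then $\Bg$ is a primitive $N\times N$, hence unimodular, matrix, so the pulled-back forms cutting out the codimension-$2$ space $X$ remain coprime and their gcd cannot vanish at $\varphi_1(\xi)$; this forces $k\ge1$, and you should include such a remark.
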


\begin{proof}
  The proof is very similar to that of Theorem~\ref{multiple}.  
  
Let $0=a_0<a_1 < a_2 < \cdots < a_N=:D$ and $\xi\in \Qb^{\times}\setminus
\upmu_{\infty}$ such that the matrix $A(\ag,(1:\xi^{a_1}:\ldots:\xi^{a_N}))$ has rank $<N$.  
For each subset $\Lambda\subset\{0,\ldots,N\}$, we put $v_{\Lambda,j}=
\xi^{a_j}$ if $ j\in\Lambda$ and $v_{\Lambda,j}=0$ otherwise. Then we assume that
\begin{multline}
\label{subsums2}
\hbox{for all nonempty }\Lambda\subsetneq\{0,\ldots,N\}, 
\rank(A(\ag,(v_{\Lambda,0}:v_{\Lambda,1}:\ldots:v_{\Lambda,N})))=N.
\end{multline}
This extra assumption may be removed, proceeding as in the last part
of the proof of Theorem~\ref{multiple}.

Let $d=\gcd(a_1,\ldots,a_N)$ and put $a'_i=a_i/d$, $i=1,\ldots,N$. As
in the proof of Theorem~\ref{multiple}, we may assume, by replacing
$\ag$ by $\ag'$, that $d=1$.

As already remarked, the linear space $X$ defined by 
$$
\rank(A(\ag,(x_0:x_1:\ldots:x_N)))<N
$$
is defined by two linear equations, say 
$$
L_i=\gamma_{i,0}x_0+\gamma_{i,1}x_1+\cdots+\gamma_{i,N}x_n,\qquad i=1,2,
$$ 
with coefficients $\gamma_{i,j}$ bounded by $N!D^{N^2}$. We apply Theorem~\ref{gcd-ALS+}, 
choosing $K_0=\Q$, $s=2$ and $\varphi(t)=(t^{a_1},\ldots,t^{a_N})$.  Thus 
$$
f_i=\gamma_{i,0}+\gamma_{i,1}t^{a_1}+\cdots+\gamma_{i,N}t^{a_N},\qquad i=1,2.
$$ 
These two polynomials are not both zeros, since otherwise 
$$
\rank(A(\ag,(1:t^{a_1}:\ldots:t^{a_N})))<N
$$ 
identically, which is not possible by the assumption $0<a_1 < a_2 < \cdots < a_N$.

Let $B'=B'(N,1)$ be the constant which appears in Theorem~\ref{gcd-ALS+}. If the inequality~(\ref{main-inequa}) of that 
theorem is not satisfied, we have that
$$
D^{\frac{1}{2(N-1)}}\leq B'\cdot (1+N^2\log D+N\log N),
$$
which shows that $D\leq C'_1$ for some positive constant
$C'_1=C'_1(N)$. In this case we simply choose $k=N-1$, $\bg_i=a_i$ for
$i=1,\ldots,N$ and $\theta_1=1$. Assertions \eqref{item:14},
\eqref{item:15} and~\eqref{item:16} of Theorem~\ref{pirola} are
clearly verified for $C'\geq C'_1$.

Thus we may assume that the inequality~(\ref{main-inequa}) is
satisfied. Theorem~\ref{gcd-ALS+} then gives a {nonnegative} integer $k<N$
and two homomorphisms $\psi\colon\Gm^{N-k}\to\Gm^N$ and
$\varphi_1\colon\Gm\to\Gm^{N-k}$ satisfying \eqref{item:7},
\eqref{item:8} and \eqref{item:9} of that theorem.  Let
$\psi(\yg)=(\yg^{\bg_1},\ldots,\yg^{\bg_N})$ and
$\varphi_1(t)=(t^{\theta_1},\ldots,t^{\theta_{N-k}})$ with
$\bg_1,\ldots,\bg_N\in\Z^{N-k}$ of size $\leq B'$ and
$\theta_1,\ldots,\theta_{N-k}\in\Z$ of size $\leq B' D$. By
Theorem~\ref{gcd-ALS+}\eqref{item:7}, the matrix $\Bg=(b_{i,j})_{i,j}$
is primitive and $\ag=\Bg\cdot\thetag$. 

By the assumption~(\ref{subsums2}), $\xi$ is not a common root of
$\sum_{j\in\Lambda}\gamma_{i,j}t^{a_j}$, $i=1,2$, for any nonempty
$\Lambda\subsetneq\{0,\ldots,N\}$. Thus, by
Theorem~\ref{gcd-ALS+}\eqref{item:9}, the greatest common divisor of
$F_1(\yg^{\bg_1},\ldots,\yg^{\bg_N})$ and
$F_2(\yg^{\bg_1},\ldots,\yg^{\bg_N})$ must vanish at
$(\xi^{\theta_1},\ldots,\xi^{\theta_{N-k}})$. This means that $V$ has
a component of codimension $1$ through the point
$(\xi^{\theta_1},\ldots,\xi^{\theta_{N-k}})$, as
required. {Since $X$ is a linear space of codimension $2$ and 
$\Bg$ is primitive, we must have $k\geq 1$.}
Theorem~\ref{pirola} follows by choosing
$C'=\max\{C'_1,B'\}$.
\end{proof}

\begin{remark} \label{rem:2} An immediate consequence of
  Theorem~\ref{pirola}(\ref{item:14},\ref{item:15}) is that the
  vectors $\ag$ such that the matrix
  $A(\ag,(1:\xi^{a_1}:\ldots:\xi^{a_N}))$ has rank $<N$ for some
  $\xi\in \Qb^{\times}\setminus \upmu_{\infty}$, lie on a finite union
  of proper vector subspaces of $\Q^N$, which is effectively
  computable for every given $N$. 

  Moreover, the condition \eqref{item:16} can be translated in terms
  of resultants, and can be checked by the search of integral points
  $\thetag=(\theta_1,\ldots,\theta_{N-k})\in\Z^{N-k}$ on a finite
  family of varieties, depending only on $N$.  More precisely, fix
  $k\in\{1,\ldots,N-1\}$ and fix one of the finitely many
  $N\times(N-k)$ {primitive matrix $\Bg=(b_{i,j})_{i,j}$ with}
  entries of size bounded by $C(N)$.  Let
  $F_i(\theta_1,\ldots,\theta_{N-k};y_1,\ldots,y_{N-k})$, $i=1,2$, be
  any two distinct $(N-1)\times(N-1)$ determinants of the matrix
$$
\begin{pmatrix}
\sum b_{1j}\theta_j&(\sum b_{1j}\theta_j)^2&\cdots&(\sum b_{1j}\theta_j)^{N-2}&\yg^{b_1}-1\\[0.1cm]
\sum b_{2j}\theta_j&(\sum b_{2j}\theta_j)^2&\cdots&(\sum b_{2j}\theta_j)^{N-2}&\yg^{b_2}-1\\[0.1cm]
\vdots&\vdots&\cdots&\vdots&\vdots\\[0.1cm]
\sum b_{Nj}\theta_j&(\sum b_{Nj}\theta_j)^2&\cdots&(\sum b_{Nj}\theta_j)^{N-2}&\yg^{b_N}-1
\end{pmatrix}.
$$
Compute the resultant $R\in\Z[\thetag][y_1,\ldots,y_{N-k-1}]$ of $F_1$ and $F_2$ with respect to, say, $y_{N-k}$ and let $W$ be the variety defined by the vanishing of the coefficients of $R$, viewed as a polynomial in the variables $y_1,\ldots,y_{N-k-1}$. Then $V$ has a component of codimension $1$ if and only if $\thetag\in W$ and $\ag=\Bg\cdot\thetag$.
\end{remark}

\bibliographystyle{amsalpha}
\bibliography{biblio}

\end{document}